\newtheorem{thm}{Theorem}[section]
\newtheorem{pro}[thm]{Proposition}
\newtheorem{lem}[thm]{Lemma}
\newtheorem{cor}[thm]{Corollary}
\theoremstyle{definition}
\newtheorem{df}[thm]{Definition}
\theoremstyle{remark}
\newtheorem{rmk}[thm]{Remark}
\newtheorem{ex}[thm]{Example}
\newcommand\R{\mathbb{R}}
\newcommand\Z{\mathbb{Z}}
\title{Continuous cohomology of topological quandles}
\author{Mohamed Elhamdadi}
\address{Department of Mathematics and Statistics,
University of South Florida, Tampa, FL 33620, U.S.A.}
\email{emohamed@math.usf.edu}
\author{Masahico Saito}
\address{Department of Mathematics and Statistics,
	University of South Florida, Tampa, FL 33620, U.S.A.}
\email{saito@usf.edu}
\author{Emanuele Zappala}
\address{Department of Mathematics and Statistics,
	University of South Florida, Tampa, FL 33620, U.S.A.}
\email{zae@mail.usf.edu}
\begin{document}

\maketitle

\begin{abstract}
A continuous cohomology theory for topological quandles is introduced, and compared to the algebraic theories.
Extensions of topological quandles  are studied with respect to continuous 2-cocycles, and used to show the differences
in second cohomology
groups for specific topological quandles.
A method of computing the cohomology groups of the inverse limit is applied to quandles.

\end{abstract}

\tableofcontents

\section{Introduction}
Quandles are
non-associative algebraic structures whose study was motivated, in part, by knot theory.  Quandles and racks arose in many different areas of mathematics and appeared in the literature with different names since 1940's (e.g.~\cite{Takasaki}).
In 1982,  Joyce \cite{Joyce} (used the term quandle) and Matveev \cite{Matveev} (who call them distributive groupoids)  introduced independently the notion of a knot quandle $Q(K)$
associated to each oriented knot $K$.
The knot quandle is a complete invariant up to reversed mirror.
Since then quandles have been investigated
for constructing knot and link invariants
(e.g.~\cite{CES, CJKLS, EM}).
Quandles have been also studied
from different point of views such as the study of the set-theoretic Yang-Baxter equation \cite{CES2} and pointed Hopf algebras \cite{AG}.   Topological quandles were introduced in \cite{Rubin} and investigated further in \cite{ChengElhamdadiShekhtman, ClarkSaito, EM}.  Smooth quandles and their cohomology have been introduced by Nosaka~\cite{Nosaka}.

Homology theories of racks \cite{FRS} and quandles \cite{CJKLS} have been defined and investigated algebraically. They have also been used for constructing knot invariants called quandle cocycle invariants.
Continuous cohomology of topological groups  have been studied extensively (see \cite{Stasheff} for an overview), and the problem has been posed how quandle cohomology theories can be extended to topological quandles.

In the present article, a continuous cohomology theory for topological quandles is introduced, and compared to the algebraic theories.
 Extensions of topological quandles are studied with respect to continuous 2-cocycles, and used to show  differences in second cohomology
groups for specific topological quandles,
and to show non-triviality of continuous cohomology groups.
The formula of computing cohomology groups for inverse limits
have been used in various contexts (see  \cite{Wilson} for the case of groups).
We investigate this method in our context of quandles, and
apply it to concrete families of quandles to determine
cohomology groups of their inverse limits.

The following is the organization of the paper.  Section~\ref{Prel} gives the preliminary background on quandles.  In Section~\ref{secCohomology}, a continuous cohomology is introduced for topological quandles, and    the  first cohomology groups of topological Alexander quandles is investigated.
Furthermore
 extensions of  quandles by continuous 2-cocycles is studied  in the topological context.
 Section~\ref{InvLim} discusses the notion of inverse and direct limits of quandles and their cohomology groups, with examples presented.
Further examples of non-trivial cohomology groups are presented in Appendix~\ref{sec:nontriv},  and
the results that continuous isomorphism classes differ from algebraic isomorphism classes for
topological quandles are presented in
Appendix~\ref{ContinIsom},
that were obtained by W.~Edwin Clark.

\section{Preliminary} \label{Prel}

In this section we review basic definitions and properties of quandles.
A rack is a set $X$, together with a binary operation $ *: X \times X \longrightarrow X$ satisfying the following axioms:
\begin{itemize}
\item[(i)] for all $x,y\in X$, there exists a unique $z\in X$ such that  $z * x = y$;
\item[(ii)] for all $x,y$ and $z$ in $X$: $(x*y)*z = (x*z)*(y*z)$.
\end{itemize}
Property (ii) is commonly referred to as right self-distributivity.
Property (i)
states that the map $R_x$, defined by right $*$-multiplication by the element $x\in X$, is a bijection of $X$ onto itself.
A {\it quandle}  is an idempotent rack:
\begin{itemize}
\item[(iii)]
for all $x\in X$, $x*x = x$.
\end{itemize}

 A {\it quandle homomorphism} between two quandles $X, Y$ is
 a map $f: X \rightarrow Y$ such that $f(x*_X y)=f(x) *_Y f(y) $, where
 $*_X$ and $*_Y$
 denote
 the quandle operations of $X$ and $Y$, respectively.
   The subgroup of ${\rm Sym}(X)$ generated by the permutations ${ R}_x$, $x \in X$, is
called the {\it {\rm inner} automorphism group} of $X$,  and is
denoted by ${\rm Inn}(X)$.
A quandle is {\it indecomposable} (we use this term instead of {\it connected} to avoid confusion with topological meaning) if ${\rm Inn}(X)$ acts transitively on $X$.

A topological space $X$ with a continuous binary operation $(x,y) \mapsto x*y$ satisfying right self-distributivity and such that $R_x$ is a homeomorphism for all $x \in X$ is called a {\it topological rack}.   If moreover, the operation $*$ is idempotent then $X$ is called a {\it topological quandle}.
 Any rack $(X,*)$ can be regarded as a topological rack if $X$ is endowed with the discrete topology.

\begin{ex}
Typical examples of quandles arise as topological quandles when considered with topological structures as follows.

 Any topological group $G$ becomes a topological quandle with operation $*$ given by conjugation: $x*y = y^{-1} x y$. This quandle is 
 denoted ${\rm Conj}(G)$, the conjugation quandle associated to $G$.

For a topological group $G$ and a continuous automorphism $f: G \rightarrow G$,
$x*y=f(xy^{-1})y$ for $x, y \in G$ defines a topological quandle structure on $G$. This is called a {\it generalized Alexander quandle}
and is denoted by $(G, f)$. If $G$ is abelian it is called an Alexander quandle.

In particular,
for any $T \in {\rm GL}(n, \mathbb{R})$,
$\mathbb{R}^{n}$ can be given a topological quandle structure by defining $x*y = T x + (I - T)y$, for all $x,y \in \mathbb{R}^{n}$,
where $I$ denotes the identity matrix.
\end{ex}

\begin{ex}\label{ex:Rubin}
 Consider the $n$-dimensional sphere $\mathbb{S}^{n} \subset \mathbb{R}^{n+1}$. The operation $x*y = 2(x\cdot y)y - x$, for all $x,y \in \mathbb{S}^{n}$ endows the sphere with a topological quandle structure, where
 $x\cdot y$ denotes the standard inner product in $\mathbb{R}^{n+1}$.
Also, this operation induces a topological quandle structure on the real projective space $\mathbb{RP}^{n}$ giving a topological quandle homomorphism $S^n \rightarrow  \mathbb{RP}^{n}$.  Knot colorings by these quandles have been studied in \cite{Rubin}.

Similarly
for a topological abelian group $G$, 
a topological quandle structure on $G$ is defined by the rule $x*y = 2y - x$. These
are a generalization of what is commonly referred to as Takasaki quandles \cite{Takasaki}.
\end{ex}

For more informations about quandles, discrete and topological, the interested reader can consult \cite{Joyce, Matveev, CJKLS, EN, EM, Rubin}.

\section{Continuous Cohomology of Topological Quandles} \label{secCohomology}

In this section we
define a {\it continuous cohomology theory} for topological quandles.
A similar theory for  smooth quandles was also defined and studied by Nosaka \cite{Nosaka} independently.
Rack and quandle homology theories have been defined and studied in several different contexts and have been generalized
(\cite{AG,CES,CJKLS}, for example).
In this section we follow \cite{CES} and in Section~\ref{sec:AG} we discuss 
a generalization of the quandle cohomology defined in \cite{AG} for topological quandles.

Let $X$ be a topological quandle.
Let  $A$ be a topological abelian group, $T: A \rightarrow A$ be a continuous automorphism, and $A$ is also considered with the generalized Alexander quandle structure  $(A, T)$.
Consider the following abelian groups:
\[ \Gamma^n(X,A) = \{ f: X^n \rightarrow A\ |\ f\  \text{is continuous}, \ f(x_1, \cdots, x_n)=0 \; \text{if} \;  x_i=x_{i+1} \; \text{for some i} \ \}, \]
where $X^n$ is given the product topology induced by $X$  and the sum in $\Gamma^n$ is induced by pointwise addition in $A$.
Define  the following maps $\Gamma^n(X,A) \longrightarrow \Gamma^{n+1}(X,A)$, $n \in \mathbb{N}$:
\begin{eqnarray*}
& &  \delta^i_0 f(x_1, \ldots , x_{n+1}) = f(x_1, \ldots , \hat{x_i} , \ldots , x_{n+1}); \\
& & \delta^i_1 f(x_1, \ldots , x_{n+1}) = f(x_1*x_i, \ldots ,x_{i-1}*x_{i},x_{i+1},\ldots , x_{n+1}).
\end{eqnarray*}
We  define the chain complex:
\[\cdots \rightarrow \Gamma^n(X,A) \xrightarrow{\delta} \Gamma^{n+1}(X,A)\rightarrow\cdots\]
by setting:
\[ \delta^n =
\sum_{i=1}^{n+1}(-1)^i [ T \delta^i_0 - \delta^i_1 ] . \]
We  define the $n^{th}$ continuous cohomology group of $X$ with coefficients in $A$ by
\[ H^n_{\rm TC}(X,A) = \frac{ {\rm ker} (\delta^n)}{ {\rm im} (\delta^{n-1})},\]
assuming that the map $\delta^0$ is defined to be the canonical inclusion of the trivial group into $\Gamma^1(X,A)$, i.e. $H^1_{TC}(X,A) = \Gamma^1(X,A)$.

When $T=1$, the groups $H^n_{TC}(X,A)$ are called (untwisted)
 continuous quandle cohomology groups and will be denoted $H^n_{C}(X,A)$.
Furthermore, in Section~\ref{sec:AG}, a generalization of the twisted cohomology theory using quandle modules defined in \cite{AG} is defined and denoted by $H^n_{\rm GC}$.

Any topological quandle can be viewed as a discrete quandle, by forgetting the topological structure; it is clear that the chain complex above is a subcomplex of the usual (discrete) chain complex associated to $X$.
The original  and twisted quandle cohomology groups were denoted by $H_{\rm Q}$ and $H_{\rm T}$.
Our notation is summarized as follows.

\begin{center}
\begin{tabular}{llllll}
$H_{\rm Q}$ & : & Original (untwisted) & \quad
$H_{\rm T}$ & : & Original twisted \\
$H_{\rm C}$ & : & Continuous (untwisted) & \quad
$H_{\rm TC}$ & : & Continuous twisted \\
$H_{\rm GC}$ & : & Continuous generalized (quandle module)
\end{tabular}
\end{center}

\begin{ex} \label{ex:12}
Let $X$ be a topological quandle and $(A,T)$ be a topological Alexander quandle. Then
a continuous map $\eta: X \rightarrow A$ is a continuous 1-cocycle
if it satisfies
$T[\eta(y) - \eta (x) ] - [\eta (y) - \eta(x*y) ] =0$, that is, $\eta$ is a continuous quandle homomorphism,
$\eta(x*y)=T \eta (x) + (1-T) \eta (y)$.
If, in particular,  $T=1$ and $A$ is considered as a topological abelian group with trivial quandle structure, then the 1-cocycle condition is
$\eta (x*y) = \eta(x)$.

	A continuous map $\phi: X^2 \rightarrow A$ is a  2-cocycle if and only if it satisfies the condition:
	\[
	T\phi(x_1,x_2) + \phi(x_1*x_2,x_3) = \\
	T\phi(x_1,x_3) + (1-T)\phi(x_2,x_3) +  \phi(x_1*x_3,x_2*x_3)
	\]
    and $\phi(x,x)=0$.
	These considerations appear in \cite{CES} except the requirement of continuity.
\end{ex}

\subsection{Continuous First Cohomology}

Let $X$ be a topological quandle and $(A, T)$ be a topological Alexander quandle.
As mentioned in Example~\ref{ex:12}, it follows from the definition that $H^1_{TC}(X,A)$ is the group of
continuous quandle homomorphisms $\eta: X \rightarrow A$.

As a special case of $T=1$, where $A$ is regarded as a topological abelian group with the trivial quandle structure,
we have the following.
\begin{pro}
Let $X$ and $A$ be as above. If $X$ is indecomposable, then  the first cohomology group $H^1_{C}(X,A)$ is isomorphic  to $A$.
\end{pro}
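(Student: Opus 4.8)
The plan is to read off $H^1_C(X,A)$ from the description already given in this section. Since here $T=1$ and $A$ is given the trivial quandle structure, the computation in Example~\ref{ex:12} together with the remark opening this subsection identifies $H^1_C(X,A)$ with the abelian group (under pointwise addition inherited from $A$) of all continuous maps $\eta\colon X\to A$ satisfying the $1$-cocycle condition $\eta(x*y)=\eta(x)$ for all $x,y\in X$. So the task is to show that this group of $1$-cocycles is isomorphic to $A$.

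First I would reformulate the cocycle condition as an invariance statement: $\eta(x*y)=\eta(x)$ for all $x$ is exactly $\eta\circ R_y=\eta$, i.e. a $1$-cocycle is precisely a continuous map invariant under every $R_y$. Composing $\eta\circ R_y=\eta$ with $R_y^{-1}$ on the right gives $\eta\circ R_y^{-1}=\eta$ as well, so any $1$-cocycle $\eta$ is invariant under the entire subgroup $\langle R_y : y\in X\rangle={\rm Inn}(X)$ of ${\rm Sym}(X)$; equivalently, $\eta$ is constant on each orbit of the ${\rm Inn}(X)$-action on $X$.

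Next I would use the hypothesis: since $X$ is indecomposable, ${\rm Inn}(X)$ acts transitively, so $X$ consists of a single orbit (in particular $X\neq\emptyset$), and therefore every $1$-cocycle is a constant map. Conversely, for each $a\in A$ the constant map $\eta_a\equiv a$ is continuous, lies in $\Gamma^1(X,A)$, and trivially satisfies $\eta_a(x*y)=a=\eta_a(x)$, hence is a $1$-cocycle. The assignment $a\mapsto\eta_a$ is a homomorphism of abelian groups because the group law on $\Gamma^1(X,A)$ is pointwise addition; it is surjective by the previous step and injective because evaluation at any fixed point $x_0\in X$ recovers $a=\eta_a(x_0)$. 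This exhibits the desired isomorphism $A\cong H^1_C(X,A)$.

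There is no serious obstacle here: the argument is essentially the observation that a $1$-cocycle is an ${\rm Inn}(X)$-invariant function. The only points deserving a word of care are that the specialization $T=1$ with the trivial quandle structure on $A$ really does reduce the cocycle condition to $\eta\circ R_y=\eta$ with no residual constraint (in particular continuity imposes nothing once the map is forced to be constant), and that indecomposability is invoked in the usual form in which transitivity of the ${\rm Inn}(X)$-action also ensures $X$ is nonempty, so that the constant cocycles $\eta_a$ are pairwise distinct.
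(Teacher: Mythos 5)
Your proposal is correct and follows essentially the same route as the paper's proof: both observe that the untwisted $1$-cocycle condition $\eta(x*y)=\eta(x)$ (together with its consequence for $*^{-1}$) forces $\eta$ to be constant on $\mathrm{Inn}(X)$-orbits, so indecomposability makes every cocycle constant, and the constant maps then give the isomorphism with $A$ under pointwise addition. Your phrasing via invariance under the subgroup generated by the $R_y$ is just a repackaging of the paper's explicit induction over a word $(\cdots(x*^{\epsilon_1}y_1)\cdots*^{\epsilon_n}y_n)=x'$.
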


\begin{proof}
The proof is similar to the original case, and our purpose here to include the proof is to observe that  the continuity does not play a role in the argument below as the 1-cocycle condition implies functions being constant.

Let $x, x'\in X$ be arbitrary elements of $X$ and let $f : X \rightarrow A$ be a $1$-cocycle. By indecomposability  of $X$ there exist $y_1, \ldots , y_n\in X$ such that $(\cdots (x*^{\epsilon_1}y_1)*^{\epsilon_2}\cdots *^{\epsilon_n}y_n )= x'$, such that $\epsilon_i=\pm 1$, where $*^{-1}$ is defined by $x*^{-1}y=z$ if 
$z*y=x$.
Recall that in this case the 1-cocycle condition is $f(x*y)=f(x)$, which also implies $f(x*^{-1} y)=f(x)$.
Therefore
\[f(x') = f(\cdots (x*^{\epsilon_1}y_1)*^{\epsilon_2} \cdots *^{\epsilon_n} y_n ) = f(\cdots (x*^{\epsilon_1}y_1)*^{\epsilon_2} \cdots *^{\epsilon_{n-1}}y_{n-1} )\]
where the second equality follows from the $1$-cocycle condition for $f$.
Inductively
it follows that $f$ is a constant map.
On the other hand, any constant map satisfies the cocycle condition and is continuous, hence it is in $H^1_C(X,A)=Z^1_C(X, A)$.
Hence there is a bijective correspondence between $H^1_C(X,A)$ and $A$
that
respects the group structures as the group operation of cocycles is pointwise, and
the lemma follows.
\end{proof}

\begin{pro}
Let $X=({\mathbb R}^n, S)$ and $A=({\mathbb R}^m, T)$
be  indecomposable Alexander quandles, where $S, T$ are continuous additive automorphisms.
Then $H^1_{TC} (X, A)$ is isomorphic to
\[
\{ \ F + a : {\mathbb R}^n  \rightarrow {\mathbb R}^m \ | \ a \in A, \ F \ \text{is linear,}\  FS=TF \ \} .
\]
\end{pro}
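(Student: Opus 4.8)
The plan is to identify $H^1_{TC}(X,A)$ with the set of continuous quandle homomorphisms $\eta:\mathbb{R}^n\to\mathbb{R}^m$, as recorded in Example~\ref{ex:12}, and then to solve the functional equation $\eta(x*y)=T\eta(x)+(1-T)\eta(y)$ explicitly using the affine structure of $\mathbb{R}^n$. Concretely, the $1$-cocycle condition here reads
\[
\eta(Sx+(I-S)y)=T\eta(x)+(I-T)\eta(y)\qquad\text{for all }x,y\in\mathbb{R}^n.
\]
First I would normalize: set $a:=\eta(0)\in A$ and $F:=\eta-a$, so that $F(0)=0$ and $F$ satisfies the same equation with the constant absorbed (one checks the constant $a$ automatically satisfies the equation because $T a+(I-T)a=a$, using that $A$ is a quandle so $a*a=a$). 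Thus it suffices to show that a continuous $F$ with $F(0)=0$ satisfying the above is linear and intertwines $S$ and $T$, i.e. $FS=TF$; the converse direction (that every such $F+a$ is a continuous cocycle) is an immediate substitution.

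The key steps for the forward direction: setting $y=0$ gives $F(Sx)=TF(x)$, i.e. $FS=TF$, provided $S$ and $I-S$ behave well — but here one must be slightly careful, since we need $Sx$ to range over all of $\mathbb{R}^n$, which holds because $S$ is an automorphism; this yields $FS=TF$ directly. Setting $x=0$ gives $F((I-S)y)=(I-T)F(y)$, which is consistent with the previous relation once we know $F$ is additive. To get additivity, substitute cleverly: since $S$ is invertible, for arbitrary $u,v\in\mathbb{R}^n$ choose $x,y$ with $Sx+(I-S)y=u$ in a one-parameter family, or more robustly, use the self-distributive/cocycle relation together with $FS=TF$ and $F(I-S)=(I-T)F\circ$(something) to derive the Cauchy equation $F(u+v)=F(u)+F(v)$ on $\mathbb{R}^n$. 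Continuity of $F$ then upgrades the additive $F$ to an $\mathbb{R}$-linear map by the standard argument (additive $+$ continuous $\Rightarrow$ linear over $\mathbb{Q}$, hence over $\mathbb{R}$).

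The main obstacle I anticipate is establishing additivity of $F$ from the cocycle equation in a clean way: the equation only controls $F$ on the affine combinations $Sx+(I-S)y$, and while $S$ being invertible lets one reparametrize, getting the full Cauchy equation $F(u+v)=F(u)+F(v)$ for arbitrary $u,v$ requires combining the cocycle identity at several points and using the relations $FS=TF$ and $F\circ(I-S)=(I-T)\circ F$ to cancel the $T$'s. One efficient route: rewrite the cocycle equation as $F(Sx+(I-S)y)-F(Sx)=F((I-S)y)$ after using $T$-equivariance, i.e. $F(w+z)=F(w)+F(z)$ whenever $w\in \operatorname{im}S=\mathbb{R}^n$ and $z\in\operatorname{im}(I-S)$; if $I-S$ is invertible this is immediately full additivity, and if not one argues on the subspace $\operatorname{im}(I-S)$ and uses indecomposability of $X=(\mathbb{R}^n,S)$ — which forces $\operatorname{im}(I-S)=\mathbb{R}^n$, since the orbit of $0$ under the inner group is contained in $\operatorname{im}(I-S)$ and transitivity then gives surjectivity. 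Once additivity holds on all of $\mathbb{R}^n$, continuity gives linearity, $FS=TF$ is already in hand, and the description follows; I would close by remarking that the correspondence $\eta\mapsto(F,a)$ is a group isomorphism since addition of cocycles is pointwise.
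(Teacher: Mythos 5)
Your proposal is correct and follows essentially the same route as the paper: the paper normalizes $G$ to $F=G+a$ with $F(0)=0$ and then invokes Lemma~\ref{lem:similar}, whose proof is exactly your argument (set $x=0$ and $y=0$ to get $FS=TF$ and $F(I-S)=(I-T)F$, deduce full additivity from surjectivity of $S$ and of $I-S$ --- the latter being equivalent to indecomposability --- and upgrade to linearity via continuity and Lemma~\ref{lem:reem}). The only difference is that you reprove that lemma inline rather than citing it.
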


\begin{proof}
Since $X$ and $A$ are indecomposable, we have $I-S$ and $I-T$ invertible.
Let  $G \in H^1_{TC} (X, A)$.
Then $G$ is a continuous quandle homomorphism $G: X \rightarrow A$.
Then for all $a \in A$, $G+a \in H^1_{TC} (X, A)$.
For any $G \in H^1_{TC} (X, A)$, there is $a \in A$ such that $(G+a)(0)=0$.
By Lemma~\ref{lem:similar}, $F=G+a$ is linear, and $FS=TF$.
Hence the result follows.
\end{proof}

\begin{pro}
Let $X=({\mathbb R}^n, T)$
be  an indecomposable Alexander quandle.
Then $H^1_{\rm TC}(X, X) \not\cong H^1_{\rm T}(X, X)$.
\end{pro}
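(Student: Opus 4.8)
The plan is to exploit the explicit description of $H^1_{\rm TC}(X,X)$ furnished by the previous proposition and contrast it with the much larger algebraic group $H^1_{\rm T}(X,X)$. Writing $X = ({\mathbb R}^n, T)$, the preceding result identifies $H^1_{\rm TC}(X,X)$ with the set of maps $F + a$ where $a \in {\mathbb R}^n$ and $F \colon {\mathbb R}^n \to {\mathbb R}^n$ is a \emph{linear} map commuting with $T$, i.e. $FT = TF$. So as a group, $H^1_{\rm TC}(X,X) \cong C(T) \oplus {\mathbb R}^n$, where $C(T) \subseteq {\rm End}({\mathbb R}^n)$ is the (finite-dimensional) centralizer of $T$ in the algebra of linear endomorphisms. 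In particular this is always a finite-dimensional real vector space, of dimension at most $n^2 + n$.

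First I would recall, following Example~\ref{ex:12} in the discrete (non-continuous) setting, that $H^1_{\rm T}(X,X) = Z^1_{\rm T}(X,X)$ consists of \emph{all} set-maps $\eta \colon {\mathbb R}^n \to {\mathbb R}^n$ satisfying the twisted $1$-cocycle condition $\eta(x*y) = T\eta(x) + (I-T)\eta(y)$, with no continuity constraint. The strategy is then to produce far more such cocycles than the continuous ones — indeed, uncountably many linearly independent ones over ${\mathbb R}$ — using the fact that we have only required $\eta$ to be an abstract quandle homomorphism $({\mathbb R}^n, T) \to ({\mathbb R}^n, T)$. The key observation is that $({\mathbb R}^n, T)$ is an Alexander quandle, hence the cocycle condition is ${\mathbb Z}[t^{\pm 1}]$-linear in the appropriate sense: any ${\mathbb Z}[t^{\pm 1}]$-module endomorphism of ${\mathbb R}^n$ (with $t$ acting as $T$) is a $1$-cocycle, and since ${\mathbb R}$ is a field, ${\mathbb R}^n$ as a ${\mathbb Q}$-vector space (or as an abelian group) admits ${\mathbb Q}$-linear, hence additive, self-maps commuting with $T$ that are \emph{not} ${\mathbb R}$-linear — e.g. obtained from a ${\mathbb Q}$-basis (Hamel basis) of ${\mathbb R}^n$. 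Concretely, fix a Hamel basis and define $\eta$ to act by a wild ${\mathbb Q}$-linear automorphism that commutes with $T$; this is an additive, $T$-equivariant, hence twisted-$1$-cocycle map, but it is not continuous and is not of the form $F + a$ with $F$ ${\mathbb R}$-linear. A cardinality count then shows $H^1_{\rm T}(X,X)$ has dimension $2^{\aleph_0}$ over ${\mathbb R}$ (or is simply uncountable-dimensional), while $H^1_{\rm TC}(X,X)$ is finite-dimensional, so the two groups cannot be isomorphic.

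The steps, in order: (1) invoke the previous proposition to get $H^1_{\rm TC}(X,X) \cong C(T) \oplus {\mathbb R}^n$, a finite-dimensional ${\mathbb R}$-vector space; (2) unwind Example~\ref{ex:12} to see that $H^1_{\rm T}(X,X)$ is the group of abstract quandle homomorphisms $({\mathbb R}^n,T) \to ({\mathbb R}^n,T)$, equivalently additive maps $\eta$ with $\eta T = T\eta$ (using indecomposability, so $I - T$ is invertible and the affine freedom collapses after normalizing $\eta(0)=0$, exactly as in the proof of the previous proposition but now without continuity); (3) construct an additive, $T$-equivariant, non-${\mathbb R}$-linear $\eta$ via a Hamel basis of ${\mathbb R}^n$ over ${\mathbb Q}$, compatible with the ${\mathbb Q}[t^{\pm 1}]$-module structure given by $T$; (4) conclude by a cardinality/dimension comparison that no group isomorphism exists. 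I expect step (3) to be the main obstacle: one must check that the $T$-equivariance condition does not force ${\mathbb R}$-linearity — this is where the rational structure of the problem matters, and one should argue it by choosing the Hamel basis adapted to a ${\mathbb Q}(T)$-module (or ${\mathbb Q}[t]/(\mathrm{min.\ poly})$-module) decomposition of ${\mathbb R}^n$, so that there is genuine room for a wild automorphism commuting with $T$; as a fallback, even the scalar case $\eta = \lambda \cdot \mathrm{id}$ for $\lambda$ a wild field endomorphism of ${\mathbb R}$ extended coordinatewise gives uncountably many cocycles when $T$ is a scalar matrix, and one reduces the general indecomposable case to a block already covered by this.
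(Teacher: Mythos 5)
Your route is genuinely different from the paper's. The paper simply invokes Proposition~\ref{prop:noniso} (the existence of quandle isomorphisms of ${\mathbb R}^n$ that are not continuous) to conclude that $H^1_{\rm TC}(X,X)$ is a \emph{proper subgroup} of $H^1_{\rm T}(X,X)$; you instead describe both groups explicitly --- $H^1_{\rm TC}(X,X)\cong C(T)\oplus{\mathbb R}^n$ with $C(T)$ the linear centralizer of $T$, versus ${\mathbb R}^n$ plus all additive $T$-equivariant self-maps --- and then separate them by size. Your steps (1)--(3) are sound: the normalization $\eta(0)=0$ together with the argument of Lemma~\ref{lem:similar} (minus the continuity step) does show that discrete $1$-cocycles are exactly the affine additive $T$-equivariant maps, and wild such maps exist. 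The cleanest version of your step (3) is to let $k\subset{\mathbb R}$ be the countable subfield generated by the entries of $T$, write ${\mathbb R}^n={\mathbb R}\otimes_k k^n$ with $T=\mathrm{id}\otimes T|_{k^n}$, and take $\sigma\otimes\mathrm{id}$ for any $k$-linear $\sigma:{\mathbb R}\to{\mathbb R}$. (Your fallback via a ``wild field endomorphism of ${\mathbb R}$'' does not literally exist --- every field endomorphism of ${\mathbb R}$ is the identity --- you mean a wild ${\mathbb Q}$-linear map; and the reduction of a general indecomposable $T$ to a scalar block is not obvious, so the subfield argument is the one to use.) What your approach buys is an actual proof of non-isomorphism, since proper containment alone, the paper's stated reason, does not in general preclude an abstract group isomorphism.

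However, your concluding step (4) as stated does not work. Comparing ${\mathbb R}$-dimensions --- ``finite-dimensional'' versus ``dimension $2^{\aleph_0}$ over ${\mathbb R}$'' or ``uncountable-dimensional'' --- cannot rule out an abstract group isomorphism: both groups are divisible and torsion-free, hence ${\mathbb Q}$-vector spaces, and such groups are classified by their ${\mathbb Q}$-dimension; a nonzero finite-dimensional real vector space and a real vector space of ${\mathbb R}$-dimension $2^{\aleph_0}$ both have ${\mathbb Q}$-dimension $2^{\aleph_0}$ and are therefore isomorphic as abelian groups. What saves you is that your own construction produces more cocycles than you claim: the $k$-linear endomorphisms of ${\mathbb R}$ number $(2^{\aleph_0})^{2^{\aleph_0}}=2^{2^{\aleph_0}}$, so $H^1_{\rm T}(X,X)$ has cardinality $2^{2^{\aleph_0}}$, strictly larger than the cardinality $2^{\aleph_0}$ of $H^1_{\rm TC}(X,X)$, and the two groups cannot be isomorphic because they are not even equipotent. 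Replace the dimension comparison by this cardinality comparison and the proof is complete.
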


\begin{proof}
By Proposition~\ref{prop:noniso}, there are quandle isomorphisms that are not continuous.
Hence we have that $H^1_{\rm TC}(X, X)$ is a proper subgroup of $H^1_{\rm T}(X, X)$.
\end{proof}

\subsection{Continuous Second Cohomology and Extensions}

We  define the concept of extension of a quandle, in the topological context, following \cite{CENS, CES}, see also \cite{EM}.

Assume we are given a quandle $X$ and an Alexander quandle
 $(A,T)$.
For a $2$-cocycle $\psi \in Z^2_{T}(X, A)$ (so that $\; \psi(x,x)=0 $ for all $x \in X$), a quandle structure is defined on $X \times A$ by
\[ (x,a)*(y,b) = (x*y, a*b+ \psi (x,y))\]
 for all $x,y \in X$ and $a,b \in A$, as in  \cite{CENS}.
 The resulting quandle is denoted by
 $X \times_{\psi} A$ and called an {\it extension} of $X$ by $A$.
 The projection $\pi: X \times_{\psi} A \rightarrow X$ is a quandle homomorphism.

For a topological quandle $X$ and a topological Alexander quandle $(A, T)$, we observe the following.
Assume $\psi \in Z^2_{TC}(X, A)$, a continuous 2-cocycle.
Then $X \times_{\psi} A$ has the product topology and  the quandle operation on $X \times_{\psi} A$
is continuous, therefore $X \times_{\psi} A$ is a topological quandle.
Furthermore, the projection $\pi: X \times_{\psi} A \rightarrow X$ is continuous, hence
$\pi$ is a topological quandle morphism.
Although we call $X \times_{\psi} A$  the extension of
a topological quandle, we require continuity as described above.
For topological spaces, it appears desirable to consider fiber bundles instead of product spaces. However, for the purpose of investigating differences of cohomology groups between continuous and algebraic, and for non-triviality of continuous cohomology groups, in this article we focus on the product cases. See also Section~\ref{sec:principal} that is related to this issue.

 We
 define morphisms in the class of 
 extensions of $X$ by the abelian group $A$ and, consequently, define an equivalence relation corresponding to the isomorphism classes. The class of  extensions of $X$ by $A$ can be viewed therefore as a category.
 Consider two topological  extensions $X \times_{\psi} A$ and $X \times_{\phi} A$
 where $\psi$ and $\phi$ are two $2$-cocycles. A morphism $X \times_{\psi} A  \rightarrow X \times_{\phi} A$ of  extensions of $X$ by $A$ is a morphism of topological quandles $f: X\times_{\psi}  A \rightarrow X\times_{\phi} A$  making the following diagram commute:
 \[
 \begin{tikzcd}
 X\times_\psi A \arrow [r, "f"]\arrow[d] &  X\times_\phi A\arrow[d] \\
 X \arrow[r, equal] & X
 \end{tikzcd}
 \]
  In particular, if $f$ is an isomorphism of topological quandles with the property of making the above diagram commute, it will be called an {\it isomorphism of topological  extensions}.
  Two extensions are {\it equivalent} if there is an isomorphism $f$ as above.
 We now prove the following result, analogous to the classification of the second cohomology group for group cohomology and the corresponding result for discrete quandles, as in \cite{CENS, CES}.

 \begin{pro}
 There is a bijective correspondence between equivalence classes of topological abelian extensions of $X$ by $A$ and the second cohomology group $H_{TC}^2(X,A)$ of $X$ with coefficients in $A$.
 \end{pro}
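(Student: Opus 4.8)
The plan is to mirror the classical cocycle-to-extension correspondence for discrete quandles (as in \cite{CENS, CES}), carefully checking at each step that the maps involved are continuous so that the argument descends to the topological setting. First I would fix representatives: given a continuous $2$-cocycle $\psi \in Z^2_{TC}(X,A)$ one forms the topological extension $X \times_\psi A$ as constructed above, and I would check that cohomologous cocycles give equivalent topological extensions. Concretely, if $\psi - \phi = \delta^1 \gamma$ for some continuous $\gamma \in \Gamma^1(X,A)$, then the map $f(x,a) = (x, a + \gamma(x))$ is a continuous bijection with continuous inverse $(x,a)\mapsto (x,a-\gamma(x))$, commutes with the projections to $X$, and a direct computation using the definition of $\delta^1$ shows it intertwines $*_\psi$ and $*_\phi$; hence it is an isomorphism of topological extensions. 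This gives a well-defined map from $H^2_{TC}(X,A)$ to equivalence classes of topological abelian extensions.

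Next I would construct the inverse. Given a topological abelian extension $\pi\colon E \to X$ (here meaning $E = X\times A$ with the product topology and a continuous quandle structure lifting that of $X$, with $A$ acting in the fibers as in the $\psi$-construction), one chooses the standard set-theoretic section $s\colon X \to E$, $s(x) = (x,0)$, which is continuous, and defines $\psi(x,y) \in A$ by $s(x) *_E s(y) = s(x*y) + (0,\psi(x,y))$ using the fiberwise abelian group structure; equivalently $\psi(x,y)$ is the $A$-component of $(x,0)*_E(y,0)$. Since $*_E$ and $s$ are continuous and the fiberwise operations of $A$ are continuous, $\psi\colon X^2 \to A$ is continuous. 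The quandle axioms for $E$ — idempotency and right self-distributivity — translate exactly into $\psi(x,x)=0$ and the $2$-cocycle identity displayed in Example~\ref{ex:12}, so $\psi \in Z^2_{TC}(X,A)$. I would then check that a different choice of continuous section changes $\psi$ by a continuous coboundary, so the class $[\psi] \in H^2_{TC}(X,A)$ is well defined, and that equivalent extensions yield cohomologous cocycles (the equivalence $f$ transports one section to another, up to a continuous map $X \to A$).

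Finally I would verify that the two constructions are mutually inverse: starting from $[\psi]$, building $X\times_\psi A$, and reading off the cocycle from the section $s(x)=(x,0)$ returns $\psi$ on the nose; starting from an extension $E$, extracting $\psi$, and forming $X\times_\psi A$ produces an extension isomorphic to $E$ via the identity on underlying sets (which is a homeomorphism since both carry the product topology and both quandle operations agree by construction). Combining these with the well-definedness statements from the previous two paragraphs gives the claimed bijection, and one checks it is compatible with the additive structure if one wishes to record that it is a group isomorphism, though the statement only asserts a bijection.

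The main obstacle, and the only place the topological hypotheses do real work, is ensuring continuity is preserved at every stage — in particular that the section $s(x)=(x,0)$ is continuous (immediate from the product topology), that the extracted $\psi$ is continuous (follows from continuity of $*_E$ composed with the projection and the continuous section), and that the comparison maps $f(x,a)=(x,a\pm\gamma(x))$ are homeomorphisms (clear, since $\gamma$ is continuous and $A$ is a topological group). Because we restrict to product-topology extensions rather than general fiber bundles, no local-triviality or cocycle-gluing subtleties arise, and the algebraic identities are identical to the discrete case treated in \cite{CENS, CES}; so the proof is essentially the classical one with continuity checks inserted, and I would present it at that level of detail.
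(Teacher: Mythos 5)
Your proposal is correct and follows essentially the same route as the paper's proof: both directions rest on the comparison map $f(x,a)=(x,a+g(x))$ with $g$ continuous, the computation showing $f$ intertwines the two operations exactly when the cocycles differ by $\delta g$, and the observation that continuity of $f$ and of $g$ are equivalent. The only cosmetic difference is that you phrase the inverse construction via a section of a general product-topology extension, whereas the paper takes extensions to be literally of the form $X\times_\psi A$, making that step immediate.
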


\begin{proof}
Although computations below are similar to those in \cite{CES}, we examine   topological aspects of the argument.
 Assume $ X\times_\psi A $ and $ X\times_\phi A $ are two topological  extension of $X$ with $\psi$ and $\phi$ cohomologus $2$-cocycles (i.e. they differ by a coboundary). Consider the map $f: X\times A \rightarrow X\times A$, $ (x,a) \mapsto (x, a + g(x))$, where $g : X \rightarrow A$ is such that $\delta g = \psi - \phi$.
 Since $g \in Z^1_{TC}(X,A)$, $g$ is continuous, and so is $f$.
 We have
 \[ f((x,a)*(y,b)) = f(x*y, a*b +\phi(x,y)) = (x*y, a*b  + \phi(x,y) + g(x*y)).\]
 On the other hand we have
 \[ f(x,a)*f(y,b) = (x, a+ g(x))*(y, b + g(y)) = (x*y, a*b+ g(x)*g(y) + \psi(x,y)).\]
  These two terms are equal since $\phi = \psi + \delta g$, hence $f$ is an isomorphism of quandles.
 Since it is also a homeomorphism and clearly makes the required diagram commute, we get that $X\times_\psi A $ and $X\times_\phi A $ are equivalent.

 Conversely, assume $X\times_\psi A $ and $X\times_\phi A $ are equivalent. Say $f: X\times A \rightarrow X\times A$ is an isomorphism of topological  extensions. Since, by definition, both $\pi (x,a)=x$ and $\pi (f(x,a))=x$, 
 the map $f$ is determined by its second component. Using the group structure of $A$ we can also write $f$ as
 $f(x,a)=(x, a+g(x))$
 for some map $g : X \rightarrow A$.  Now the continuity of $f$ implies the  continuity of $g$.
Since $f$ is a morphism of quandles we get, for all $x,y \in X$ and all $a,b\in A$,
\begin{eqnarray*}
\lefteqn{
  (x*y, a*b + g(x*y) + \psi(x,y)) = f((x,a)*(y,b)) }\\
 &=&
f(x,a)*f(y,b) = (x*y, a *b + g(x)*g(y) + \phi(x,y)).
\end{eqnarray*}
Equating  the second component, we find that $\psi$ and $\phi$ differ by $\delta g$,  i.e. they are representatives of the same cohomology class, since $g$ is continuous.
\end{proof}

\begin{lem}\label{lem:Kronecker}
Let $X$ be a topological quandle, and $(A, T)$ be a topological Alexander quandle.
Let $\alpha \in Z_n^{\rm T}(X, A)$ be an $n$-cycle (in the usual sense of discrete homology), and $\phi \in Z^n_{\rm TC}(X, A)$ be a continuous $n$-cocycle.
If $\phi(\alpha )\neq 0$, then $[\phi] \neq 0 \in H^n_{\rm TC}(X, A)$.
\end{lem}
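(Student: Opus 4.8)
The plan is to prove the contrapositive: assuming $[\phi]=0$ in $H^n_{\rm TC}(X,A)$, I will show that $\phi(\alpha)=0$, which immediately forces the stated conclusion. So suppose $\phi=\delta^{n-1}\psi$ for some continuous cochain $\psi\in\Gamma^{n-1}(X,A)$.

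The first step is to pass from the continuous complex to the discrete one. As observed just after the definition of $\Gamma^\bullet(X,A)$, the continuous cochain complex is a subcomplex of the usual (discrete) twisted cochain complex computing $H^\bullet_{\rm T}(X,A)$, with the coboundary given by the very same formula $\delta^n=\sum_{i}(-1)^i[T\delta^i_0-\delta^i_1]$. Hence the identity $\phi=\delta^{n-1}\psi$ persists after forgetting continuity: $\phi$ is a coboundary in the discrete twisted cochain complex, which is precisely the complex that pairs with the discrete twisted cycles $Z_n^{\rm T}(X,A)$.

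The second step is the Kronecker-pairing (adjunction) argument. The discrete twisted cochain complex is by construction obtained from the twisted chain complex $(C_\bullet^{\rm T}(X),\partial)$ by applying $\mathrm{Hom}_{\Z[T^{\pm 1}]}(-,A)$, so that the coboundary $\delta^{n-1}$ is the transpose of the boundary $\partial_n$; equivalently, evaluation satisfies $(\delta^{n-1}\psi)(\beta)=\psi(\partial_n\beta)$ for every $n$-chain $\beta$, the twist by $T$ being accounted for identically on both sides. Applying this to $\beta=\alpha$ and using $\partial_n\alpha=0$ (since $\alpha$ is a cycle) gives
\[
\phi(\alpha)=(\delta^{n-1}\psi)(\alpha)=\psi(\partial_n\alpha)=\psi(0)=0,
\]
contradicting $\phi(\alpha)\neq 0$. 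Therefore $[\phi]\neq 0$.

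The only point needing genuine care — everything else being purely formal — is the compatibility used in the first two steps: that the inclusion of the continuous cochain complex into the discrete one is a morphism of complexes (immediate, since $\delta$ is defined by the identical formula), and that the evaluation $\phi\mapsto\phi(\alpha)$ is compatible with the $\Z[T^{\pm 1}]$-module structures on chains and on $A$, so that the adjunction $\langle\delta\psi,\alpha\rangle=\langle\psi,\partial\alpha\rangle$ holds verbatim in the twisted setting. Once this bookkeeping is in place, the lemma is exactly the standard fact that a coboundary annihilates every cycle.
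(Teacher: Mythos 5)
Your proof is correct and follows essentially the same route as the paper: both arguments assume $\phi$ is a continuous coboundary, observe that it is then a coboundary in the discrete twisted complex, and apply the Kronecker-pairing adjunction $\langle\delta\psi,\alpha\rangle=\langle\psi,\partial\alpha\rangle=0$ to contradict $\phi(\alpha)\neq 0$. Your version is slightly more careful in spelling out the subcomplex inclusion and the twisted compatibility of the pairing, but the substance is identical.
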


\begin{proof}
The standard argument applies as follows in continuous case.
Suppose $\phi$ is a coboundary in $H^2_C(X,A)$. Then, by definition, there exists some continuous $f: X \rightarrow A$ such that $\delta f = \phi$ which, in particular, means that $[\phi] = 0$ in $H^2(X,A)$ (discrete cohomology).
As a consequence, applying the Kronecker pairing we get:
\[ \phi(\alpha ) = \bra{[\phi]}\ket{[\alpha]} =
 \bra{[\delta f ]}\ket{[\alpha]} =  \bra{[ f ]}\ket{[\partial \alpha]}  = 0\]
 since $\alpha \in Z_n^{\rm T}(X, A)$.
This contradicts the assumption.
\end{proof}

	Let $(G,  +)$ be a topological abelian group.
 Consider $G^m$ with the quandle structure given by the rule:
\[
(a_1, \ldots , a_m) * (b_1, \ldots , b_m) = (a_1, a_2 + b_1 - a_1, \ldots , a_m + b_{m-1} - a_{m-1}).
\]
By direct computation we see that the operation just defined respects the defining axioms for a quandle structure and it is continuous, hence define a topological quandle structure on $G^{m}$.
This construction is motivated from \cite{CENS}.

\begin{pro}
Let $(G, +)$ be a topological abelian group, $ x \neq 0$,  and
$(G^m, *)$ be the topological quandle defined as above.
Then $H^2_{\rm C}( G^{m}, G ) \neq 0$.
\end{pro}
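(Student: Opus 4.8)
The plan is to exhibit one explicit continuous $2$-cocycle on $G^m$ and check that it is not a coboundary; since $H^2_{\rm C}(G^m,G)$ is by definition ${\rm ker}(\delta^2)/{\rm im}(\delta^1)$, this suffices. The cocycle I would use is the difference of the last coordinates: for $x=(x_1,\dots,x_m)$ and $y=(y_1,\dots,y_m)$ in $G^m$ set
\[
\phi(x,y)=y_m-x_m .
\]
This is continuous (a difference of coordinate projections followed by the subtraction of $G$) and satisfies $\phi(x,x)=0$, so $\phi\in\Gamma^2(G^m,G)$.

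First I would verify the $2$-cocycle condition. Since we are in the untwisted case $T=1$, by Example~\ref{ex:12} the condition reads $\phi(x_1,x_2)+\phi(x_1*x_2,x_3)=\phi(x_1,x_3)+\phi(x_1*x_3,x_2*x_3)$. From the defining formula of $*$, the $m$-th coordinate of a product is $(u*v)_m=u_m+v_{m-1}-u_{m-1}$, and substituting this one checks that both sides equal $(x_2)_m+(x_3)_m-2(x_1)_m-(x_2)_{m-1}+(x_1)_{m-1}$, so $\phi\in Z^2_{\rm C}(G^m,G)$. (For $m=1$ the quandle $G^1$ is the trivial quandle $T(G)$, every normalized $2$-cochain is a cocycle, and the argument below still applies.)

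Next I would show $\phi$ is not a coboundary. A coboundary is of the form $\delta^1 g(x,y)=g(x)-g(x*y)$ for some continuous $g\colon G^m\to G$. Evaluating at $x=(0,\dots,0)$ and using that $(0,\dots,0)*y=(0,y_1,\dots,y_{m-1})$, an equality $\delta^1 g=\phi$ would force $g(0,\dots,0)-g(0,y_1,\dots,y_{m-1})=y_m$ for all $y\in G^m$. The left-hand side is independent of $y_m$, so restricting to $y=(0,\dots,0,c)$ (for which $(0,\dots,0)*y=(0,\dots,0)$) yields $0=c$ for every $c\in G$, contradicting the standing hypothesis that $G$ is nontrivial. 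Hence $[\phi]\neq 0$ and $H^2_{\rm C}(G^m,G)\neq 0$. Alternatively, one may apply Lemma~\ref{lem:Kronecker} to $\phi$ together with an explicit discrete $2$-cycle obtained by telescoping in the fibre direction: for $s\neq t$ in $G$, the four pairs with entries $(0,\dots,0)$, $(0,\dots,0,t)$, $(0,\dots,0,s)$ and the two "shift" elements having $t$, respectively $s$, in position $m-1$, combine into a cycle $\alpha$ with $\phi(\alpha)=s-t\neq 0$.

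I do not expect a serious obstacle; the one thing to get right is the choice of coordinate. The analogous cochain $(x,y)\mapsto y_{m-1}-x_{m-1}$ built from an earlier coordinate is a coboundary — it equals $\delta^1 g$ for $g(x)=-x_m$ — so it is essential to use the top coordinate $x_m$, the single "direction" of $G^m$ not absorbed by the self-distributive action. The only remaining care is with signs and indices in the formulas for $\delta^1$ and $\delta^2$, and with the degenerate value $m=1$, for which the quandle is trivial but the statement and the argument still hold.
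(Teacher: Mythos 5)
Your proof is correct, and it settles the same cocycle as the paper does: $\phi(x,y)=y_m-x_m$. Your verification of the $2$-cocycle condition (which the paper dismisses as ``direct computation'') is accurate, as is the formula $(u*v)_m=u_m+v_{m-1}-u_{m-1}$. Where you genuinely diverge is in proving $[\phi]\neq 0$. The paper exhibits an explicit discrete $2$-cycle $\alpha=(0,\dots,0)\times(x,0,\dots,0)+(0,x,0,\dots,0)\times(-x,x,0,\dots,0)$, checks $\partial\alpha=0$, computes $\phi(\alpha)=x\neq0$, and invokes Lemma~\ref{lem:Kronecker}; you instead rule out $\phi=\delta^1g$ by evaluating $g(u)-g(u*v)=\phi(u,v)$ at $u=(0,\dots,0)$ and $v=(0,\dots,0,c)$, where $u*v=u$ forces $0=c$ for all $c\in G$, contradicting nontriviality of $G$. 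Your route is more elementary (no homology, no Kronecker pairing) and, since continuity of $g$ is never used, it proves the stronger fact that $[\phi]\neq0$ already in the discrete group $H^2_{\rm Q}(G^m,G)$ --- the same strength the paper's cycle argument delivers. Your sketched alternative via Lemma~\ref{lem:Kronecker} is essentially the paper's method, though as stated your four-term ``telescoping'' chain is too vague to verify (the paper's cycle is a two-term chain and already does the job). The side observation that $y_{m-1}-x_{m-1}=\delta^1(-x_m)$ is correct and is a nice justification for why the top coordinate is the only viable choice. The $m=1$ degenerate case is also handled correctly.
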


\begin{proof}
Consider the following $2$-cycle (in the usual sense of discrete homology):
\[
\alpha = (0, \ldots , 0) \times (x, 0, \ldots , 0) + (0, x , 0, \ldots , 0)\times (-x, x, 0, \ldots , 1) \ ,
\]
where $\times$ has been used to better indicate that $\alpha$ is an element of $G^{m}\times G^{m}$.
By direct computation using the boundary map, it follows that $\alpha$ is indeed a $2$-cycle.
Consider also the $2$-cocycle:
\[
\phi : G^{m}\times G^{m} \longrightarrow G \]
defined by
\[ \phi ( \, (a_1, \ldots , a_m)\times(b_1, \ldots , b_m) \, ) = b_m - a_m \ .
 \]
Again by direct computation using the coboundary map it can be shown that $\phi$ is a cocycle.
Applying $\phi$ to $\alpha$ we get $\phi (\alpha) = x \neq 0$. Hence
by Lemma~\ref{lem:Kronecker} $\phi$ is not null-cohomologus, and we obtain $H^2_{\rm C}( G^{m}, G )\neq 0$.
\end{proof}

Note that a constant map $\phi$ in $\Gamma^n(X,A)$
is the zero map for $n>1$.

\begin{pro}\label{prop:linear}
Let $X=({\mathbb R}^n, S)$ and $A=({\mathbb R}^m, T)$ be  Alexander quandles, where $S \in {\rm GL}_n(\mathbb{R})$ and
 $T \in {\rm GL}_m(\mathbb{R})$, respectively.
Then $H^2_{\rm TC} ( X, A ) \neq 0$ if one of the following conditions hold.
\begin{itemize}
\item[{\rm (i)}]
There is a nonzero vector $v$ and $m \times n$ matrix $C$ such that
$  S^2-S+1 =  T^2 - T +1  =0 $ and $ C - TCS \neq  0$.
\item[{\rm (ii)}]
$T=-I $ and $S$ has eigenvalue $-1$.
\end{itemize}
\end{pro}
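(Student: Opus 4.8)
The plan is to prove non-triviality in both cases by producing an explicit continuous $2$-cocycle of the form $\phi(x,y)=C(x-y)$, $x,y\in{\mathbb R}^n$, for a suitable $m\times n$ real matrix $C$, and then ruling out that $\phi$ is a coboundary. The first observation I would make is that for \emph{every} matrix $C$ the map $\phi(x,y)=C(x-y)$ belongs to $Z^2_{\rm TC}(X,A)$: it is linear, hence continuous, it vanishes on the diagonal, and substituting $x*y=Sx+(I-S)y$ into the $2$-cocycle identity of Example~\ref{ex:12} shows that both sides equal $(TC+CS)x_1+(C-TC-CS)x_2-Cx_3$. Thus the whole statement reduces to showing that $\phi$ is not of the form $\delta^1 g$ for a continuous $g\colon X\to A$.

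The key ingredient is a rigidity lemma, proved by the same kind of functional-equation computation as the one behind Lemma~\ref{lem:similar}: if $I-S$ is invertible (equivalently, $X$ is indecomposable) and $\delta^1 g(x,y)=C(x-y)$ for some continuous $g$, then $g$ is affine, say $g(x)=Dx+g(0)$, and necessarily $C=TD-DS$. To prove it, replace $g$ by $g-g(0)$ so that $g(0)=0$ and rewrite the coboundary equation as $g(Sx+(I-S)y)=Tg(x)+(1-T)g(y)-C(x-y)$; substituting $u=Sx$, $w=(I-S)y$ (legitimate since $S$ and $I-S$ are invertible) turns this into $g(u+w)=P(u)+Q(w)$ with $P(u)=Tg(S^{-1}u)-CS^{-1}u$ and $Q(w)=(1-T)g((I-S)^{-1}w)+C(I-S)^{-1}w$. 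Setting $w=0$ and then $u=0$ forces $P(0)=Q(0)=0$, so $g$ is additive, hence ${\mathbb R}$-linear by continuity; plugging $g(x)=Dx$ back and comparing with $\phi$ yields $C=TD-DS$.

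For case (i), the polynomial $\lambda^2-\lambda+1$ has no real root, so $1$ is not an eigenvalue of $S$, $I-S$ is invertible, and the rigidity lemma applies. If $\phi=C(x-y)$ were a coboundary it would follow that $C=TD-DS$ for some $m\times n$ matrix $D$; but then, using $T^2=T-I$ and $S^2=S-I$,
\[
TCS=T(TD-DS)S=T^2DS-TDS^2=(T-I)DS-TD(S-I)=TD-DS=C,
\]
so $C-TCS=0$, contradicting the hypothesis. (In particular $C-TCS\neq0$ also forces $C\neq0$, so $\phi\neq0$.) Hence $[\phi]\neq0$ and $H^2_{\rm TC}(X,A)\neq0$.

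For case (ii), I would fix $v\neq0$ with $Sv=-v$ and take any $C$ with $Cv\neq0$ (possible since $v\neq0$). The line ${\mathbb R}v\subset X$ is a subquandle, and $a\mapsto av$ is an isomorphism of topological quandles from the Takasaki quandle $({\mathbb R},-1)$, $a*b=2b-a$, onto it, because $(av)*(bv)=S(av)+(I-S)(bv)=(2b-a)v$; here $I-(-1)=2$ is invertible. If $\phi=\delta^1 g$ on $X$, restricting to ${\mathbb R}v$ and transporting along this isomorphism shows that $\bar g(a):=g(av)$ satisfies $\delta^1\bar g(a,b)=(a-b)Cv$ as cochains on $({\mathbb R},-1)$ with coefficients in $(A,T)=({\mathbb R}^m,-I)$. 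By the rigidity lemma $\bar g$ is affine; but for $S_0=-1$ and $T_0=-I$ the associated linear coboundary operator $D_0\mapsto T_0D_0-D_0S_0$ vanishes identically, so $\delta^1\bar g\equiv0$ and hence $Cv=0$, a contradiction; thus $[\phi]\neq0$. The step I expect to be the real obstacle is exactly this rigidity reduction --- excluding \emph{non-linear} continuous primitives $g$ --- which also explains why in case (ii), where $S$ may have $1$ as an eigenvalue and $X$ then fails to be indecomposable, one must descend to the indecomposable subquandle ${\mathbb R}v$; once that is in place, case (i) closes with the short identity above and case (ii) with the vanishing of the coboundary operator for the Takasaki quandle.
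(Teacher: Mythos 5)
Your proof is correct, but it reaches the conclusion by a genuinely different route from the paper. Both arguments start from the same continuous $2$-cocycle $\phi(x,y)=C(x-y)$ (the paper obtains it as the off-diagonal block of the Alexander quandle on ${\mathbb R}^{n+m}$ given by $M=\left(\begin{smallmatrix} S & 0 \\ C & T\end{smallmatrix}\right)$, you verify the cocycle identity directly), but they diverge in how they rule out coboundaries. The paper constructs an explicit $2$-cycle $w$ in the \emph{discrete} twisted homology (for (i): $w=(u_0,v_0)+T(u_1,v_1)$ with $u_1=v_0$, $v_1=Su_0+(I-S)v_0$; for (ii): $w=(u_0,0)+T(-u_0,0)$ with $Su_0=-u_0$), checks $\partial w=0$ under the stated polynomial conditions, computes $\phi(w)=(C-TCS)(u_0-v_0)$ resp.\ $C(2u_0)$, and invokes the Kronecker-pairing Lemma~\ref{lem:Kronecker}; this in fact shows $[\phi]\neq 0$ already in the algebraic cohomology $H^2_{\rm T}(X,A)$, hence a fortiori in $H^2_{\rm TC}$. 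You instead classify all possible \emph{continuous} primitives via a Cauchy-functional-equation rigidity argument in the spirit of Lemma~\ref{lem:similar}: any continuous $g$ with $\delta^1 g=\phi$ must be affine with linear part $D$ satisfying $C=TD-DS$, after which case (i) closes with the identity $TCS=C$ (using $S^2=S-I$, $T^2=T-I$) and case (ii) by restricting to the Takasaki subquandle ${\mathbb R}v$, where the operator $D_0\mapsto T_0D_0-D_0S_0$ vanishes. Your approach is self-contained at the cochain level, needs no cycle construction, and makes transparent why the hypothesis $C-TCS\neq 0$ is exactly the condition for $\phi$ to escape the image of the linear coboundary operator; the trade-off is that it uses continuity essentially (a discontinuous additive primitive is not excluded) and therefore proves non-triviality only in the continuous theory, whereas the paper's pairing argument yields the stronger discrete statement. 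Both are complete proofs of the proposition as stated.
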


\begin{proof}
	Consider the matrix $M:=\begin{pmatrix}
	S & 0 \\
	C & T
	\end{pmatrix} \in {\rm GL}_{m+n}(\mathbb{R})$.
	Then $X \times A$
	can be considered as an Alexander quandle with operation
	$$\begin{pmatrix}
		x_1\\
		a_1
	\end{pmatrix} *\begin{pmatrix}
	x_2\\
	a_2
	\end{pmatrix}:= M \begin{pmatrix}
	x_1 \\
	a_1
	\end{pmatrix} +(I_{m+n} -M)\begin{pmatrix}
	x_2 \\
	a_2
	\end{pmatrix} . $$
	 A direct computation shows that $\begin{pmatrix}
	x_1 \\
	a_1
	\end{pmatrix} *\begin{pmatrix}
	x_2 \\
	a_2
	\end{pmatrix}= \begin{pmatrix}
	x_1*x_2\\
	a_1*a_2 +\phi(x_1,x_2)
	\end{pmatrix} $ and thus the quandle $X \times A$ is a topological twisted extension of the topological quandle $X$ with the continuous $2$-cocycle $\phi: \mathbb{R}^n \times  \mathbb{R}^n \rightarrow \mathbb{R}^m$ given explicitly by the formula
	\[
	\phi(x_1,x_2)=C(x_1-x_2).
	\]
We find a 2-cycle $w \in Z_2^{\rm T}(X,A)$ such that $\phi(w)\neq 0$ under each of the conditions (i) and (ii), then
Lemma~\ref{lem:Kronecker} implies $H^2_{\rm TC} ( X, A ) \neq 0$.
Let $w=(u_0, v_0) + T(u_1, v_1)$. This is an element of the free module over $A$ generated by pairs of elements of
$X$  as in \cite{CES}.
Then one computes
\begin{eqnarray*}
\partial w &=&
[T (u_0) + (I-T) (v_0) - (S u_1 + (I-S) v_0) ) ] \\
& & + \quad T [ T (u_1) + (I-T) (v_1) - (S u_1 + (I-S) v_1) ) \\
&=&
[ ( v_0)  -  (S u_1 + (I-S) v_0)  ] \\
& & +
T [ (u_0) - ( v_0 ) + ( v_1 ) - (S u_1 + (I-S) v_1) ]
+
T^2 [ (u_1) - ( v_1) ]
. \quad (*)
\end{eqnarray*}
where $(v_1)$ is a 1-chain, and not an element of $X$.

(i) Set $u_1=v_0$ and $v_1=S u_0  + (I- S) v_0$. Then the expression  $(*)$ vanishes if
$$(S^2 - S + 1) (u_0 - v_0)=(T^2 - T + 1)(u_0 - v_0) =0 . $$
Hence the assumption $  S^2-S+1  = 0= T^2 - T +1  $ implies  $\partial w=0$.

One computes $\phi(w)= ( C - TCS) (u_0 - v_0)$.
One can choose  nonzero vectors $u_0$ and $v_0$ satisfying this condition under the assumption  $C - TCS \neq 0$.
Hence
$\phi(w)= ( C - TCS) (u_0 - v_0) \neq 0 $ follows from the assumption.
From Lemma~\ref{lem:Kronecker} we obtain that $\phi$ is not null cohomologous.

(ii) Set  $T=-I$. Further, set $v_0=0$, $u_1= -u_0$, and $v_1=0$. Then $(*)$ vanishes if there exists
$u_0$ such that $Su_0=-u_0$, and this condition is satisfied by the assumption.
Then one computes
$\phi(w)= C(u_0 - 0) + C( - u_0 - 0) = C( 2 u_0)$. By choosing $C$ and $u_0$ such that $C u_0 \neq 0$,
one obtains a non-trivial 2-cocycle $\phi$.
\end{proof}

\begin{ex}
	Let $n=4$, $m=2$, $S=T \oplus T$, and $T=\begin{pmatrix}
	0 & -1\\
	1 & 1
	\end{pmatrix}$.
	Then $S^2-S+1=0=T^2-T+1$.
	Let $C=(I, I)$ where $I$ is $2\times 2$ identity matrix. Then $C-TCS= (I-S^2 , I-S^2 )$
	is not the zero matrix,
	and conditions in (i) in  Proposition~\ref{prop:linear} are satisfied, and we obtain  $H^2_{\rm TC} ( X, A ) \neq 0$
	for this specific example.
\end{ex}

\begin{rmk}
	The conditions of Proposition~\ref{prop:linear} (i) can be extended to those for higher degree alternating polynomials.
	Since the idea of proof is the same but the computations are lengthy, we delay its statement and proof to Appendix~\ref{sec:nontriv}.
\end{rmk}

\subsection{Continuous Cohomology with Quandle Modules}\label{sec:AG}

The goal of this subsection is to introduce  a topological version of the cohomology theory generalized in
\cite{AG}  and exhibit explicit examples with non-trivial continuous cohomology.
Before defining the generalized cohomology theory of topological quandles we
present a few
preliminary concepts.

 Let $X$ be a topological quandle.  Recall~\cite{AG, EM} that an $X$-module is a triple $(A,\eta,\tau)$ where $A$ is a topological abelian group, $\eta$ is a family of continuous group automorphisms $\eta_{x,y}:A\rightarrow  A$  and $\tau$  is a family of continuous group morphisms $\tau_{x,y}:A \rightarrow  A$
 such that

\begin{enumerate}
\item
$\eta_{x * y,z}\; \eta_{x,y} =  \eta_{x * z,y * z}\; \eta_{x,z}$,
\item
$ \eta_{x * y,z}\; \tau_{x,y} =  \tau_{x * z,y * z}\; \eta_{y,z}$,
\item
$\tau_{x * y,z}= \eta_{x * z,y * z} \tau_{x,z}+
 \tau_{x * z,y * z}\tau_{y,z}$, \quad and

  \item
   $\tau_{x,x} + \eta_{x,x} ={\rm id}_{ A}$.
\end{enumerate}

We consider the abelian groups
 $\Gamma^n(X, A)$, $\delta^i_0$ and $\delta^i_1$  as defined above in Section~\ref{secCohomology}.  Define the differentials by the following formula
  \begin{eqnarray*}
 \delta^n & : =&\sum_{i=2}^{n+1}(-1)^i\left(\eta_{[x_1,\cdots,\widehat{x}_i,\cdots, x_{n+1}],[x_i,\cdots, x_{n+1}]}\delta^i_0 - \delta^i_{  1}\right) \\
  & & + \ \tau_{ [x_2, x_3, \ldots, x_{n+1}], [x_1, x_3, \ldots, x_{n+1}]}\delta^1_0
 \end{eqnarray*}
 where
 $$[x_1, x_2, x_3, \ldots, x_{n}] = (( \cdots ( x_1 * x_2) * x_3 ) \cdots ) * x_n . $$
  The resulting cohomology groups  are denoted by
 $H^n_{GC}(X, A)$.

 As in the discrete case, it is easy to see that we obtain a cochain complex \[\cdots \rightarrow \Gamma^n(X,A) \xrightarrow{\delta} \Gamma^{n+1}(X,A)\rightarrow\cdots\]
  Following \cite{AG}, if $X$ is a topological quandle and  $(A, \eta, \tau)$ is a topological quandle module, we can define a topological quandle structure on $X \times A$ by:
\[ (x,a)*(y,b) = (x*y, \eta_{x,y}(a) + \tau_{x,y}(b) + \kappa_{x,y}),\]
 for all $x,y \in X$ and $a,b \in A$, where $X \times A$ is given the product topology.
This formula defines a topological quandle structure if and only if
$\kappa_{x,y}$ is a $2$-cocycle of this cohomology theory.

  We have that the
  projection onto the
  first factor
   is
   an epimorphism of topological quandles.

Let $G$ be the subgroup of ${\rm GL}(2n, \mathbb{R})$ for a positive integer $n$, consisting of block matrices of the form
$$G= \left\{ \
\left. E= \left(
\begin{array}{cc} S & O \\ C & T \end{array}
\right) \
  \right| \
S, T  \in {\rm GL}(n, \mathbb{R} ), C \in {\rm M}(n, \mathbb{R} ) \
\right\} , $$
where $O$ denotes the zero matrix,
and consider $X=G  \times  \mathbb{R}^n $
with quandle operation
$$
( E_0, x_0) * ( E_1,   x_1)=
( E_1 E_0 E_1^{-1},   S_1 x_0 + (I - S_1) x_1 )  ,
$$
where
$E_i = \left(
\begin{array}{cc} S_i & O \\ C_i & T_i \end{array}
\right)
$
for $i=0,1$.
Let $A=\R^{n}$ and consider endomorphisms of $A$ defined by
$\eta_{ (E_0, x_0), (E_1, x_1) } (a) = T_1 a $ and  $\tau_{ (E_0, x_0), (E_1, x_1) } (a) = (I - T_1) a $.
It is checked by computation that these define a $X$-module structure on $A$.

\begin{thm}\label{thm:AG}
$H^2_{GC}(X, A) \neq 0$.
\end{thm}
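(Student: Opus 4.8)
The plan is to mimic the strategy used throughout this section: exhibit an explicit continuous $2$-cocycle $\kappa$ for the quandle module $(A,\eta,\tau)$ on $X$ together with a discrete $2$-cycle $w \in Z_2^{\rm T}(X,A)$ on which $\kappa$ does not vanish, and then invoke Lemma~\ref{lem:Kronecker} (whose statement applies verbatim to the generalized theory $H^n_{\rm GC}$, since the argument only uses that a coboundary is a coboundary in the discrete sense and the Kronecker pairing). First I would produce the cocycle: since $X = G \times \mathbb{R}^n$ carries a quandle operation in which the $G$-component multiplies by conjugation and the $\mathbb{R}^n$-component behaves like an Alexander quandle twisted by $S_1$, the natural candidate is to let $\kappa_{(E_0,x_0),(E_1,x_1)} = C_1 x_0 + (\text{something in }x_1)$, modelled on the matrix extension $M = \bigl(\begin{smallmatrix} S & O \\ C & T\end{smallmatrix}\bigr)$ appearing in Proposition~\ref{prop:linear}. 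Concretely, I expect $\kappa_{(E_0,x_0),(E_1,x_1)} = C_1(x_0 - x_1)$ (or a close variant) to satisfy the generalized $2$-cocycle condition with $\eta_{\bullet}(a) = T_1 a$, $\tau_{\bullet}(b) = (I-T_1)b$; verifying this is the $n=2$ case of the module axioms combined with the explicit formulas, and amounts to checking that the product $X \times A$ with operation $(x,a)*(y,b) = (x*y,\ \eta_{x,y}(a) + \tau_{x,y}(b) + \kappa_{x,y})$ is again a quandle of the same type — which it is, essentially by associativity of block-matrix conjugation and the Alexander-quandle identity for the lower-triangular block $M$.

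Next I would build the cycle. Following the computation in Proposition~\ref{prop:linear}, I would take $w = (p_0, q_0) + T_1'(p_1,q_1)$ — more precisely an element of the free $A$-module on pairs of elements of $X$ — with the $G$-components of $p_i, q_i$ chosen to be a fixed pair $E, E'$ (so that the conjugation part of $\partial w$ collapses) and the $\mathbb{R}^n$-components chosen exactly as in the proof of Proposition~\ref{prop:linear}(i) or (ii), i.e. $u_1 = v_0$, $v_1 = Su_0 + (I-S)v_0$, forcing $\partial w = 0$ whenever $S^2 - S + I = 0$ and $T^2 - T + I = 0$. The key point is that the $G \times \mathbb{R}^n$ structure restricted to a single conjugacy class of $G$ reduces to the Alexander quandle $(\mathbb{R}^n, S)$, so the boundary computation in the excerpt transfers directly. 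To avoid imposing conditions on $S,T$ one could instead work with a well-chosen finite ``loop'' in $X$; but reusing the Proposition~\ref{prop:linear} computation is cleaner, so I would simply specialize $S, T$ (e.g. to the $\bigl(\begin{smallmatrix}0 & -1\\ 1 & 1\end{smallmatrix}\bigr)$-type blocks already used in the Example) and a $C$ with $C - TCS \neq 0$, which exist for suitable $n$.

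Finally, I would compute $\kappa(w)$ and show it is nonzero: plugging $w$ into $\kappa_{(E_0,x_0),(E_1,x_1)} = C_1(x_0 - x_1)$ gives, exactly as in Proposition~\ref{prop:linear}, an expression of the form $(C - TCS)(u_0 - v_0)$, which is nonzero for a suitable choice of $u_0, v_0$ once $C - TCS \neq 0$. Then Lemma~\ref{lem:Kronecker} yields $[\kappa] \neq 0$ in $H^2_{\rm GC}(X,A)$, hence $H^2_{\rm GC}(X,A) \neq 0$. I expect the main obstacle to be purely bookkeeping: correctly identifying which cocycle formula is forced by the module maps $\eta_{\bullet} = T_1$, $\tau_{\bullet} = I - T_1$ (the generalized $2$-cocycle condition is longer than the Alexander one, with the $\tau\,\delta^1_0$ term), and checking that the $G$-conjugation components genuinely drop out of $\partial w$ so that the reduction to the $(\mathbb{R}^n,S)$ computation is legitimate. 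Neither step is conceptually hard, but the generalized differential has enough terms that care is needed to confirm the chosen $\kappa$ and $w$ satisfy the cocycle/cycle conditions exactly.
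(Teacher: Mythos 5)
Your proposal is correct and follows essentially the same route as the paper: the paper identifies the same cocycle $\kappa_{(E_0,x_0),(E_1,x_1)}=C_1(x_0-x_1)$ from the extension $X\times A$, takes the $2$-cycle $w=[(E,x),(E,0)]-[(E,-x),(E,0)]$ with a single matrix $E$ whose diagonal blocks are $S_1=T_1=-I$ (your ``condition (ii)'' specialization, which makes the conjugation components collapse because every entry of the chain shares the same $E$), computes $\kappa(w)=C_1(2x)\neq 0$, and concludes by the Kronecker-pairing argument of Lemma~\ref{lem:Kronecker}. The only refinement over your sketch is that the paper uses one fixed $E$ rather than a pair $E,E'$, which is the cleanest way to guarantee the $G$-components drop out of $\partial w$.
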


\begin{proof}
The quandle operation on $X \times A= G \times \mathbb{R}^{2n} $ defined by
$$
( E_0, u_0) * ( E_1,   u_1)=
( E_1 E_0 E_1^{-1},   E_1 u_0 + (I - E_1) u_1 )  ,
$$
where $u_i=(x_i, a_i)$ and $a_i \in A$ for $i=0,1$ are computed as operation on $X \times A$ as
\begin{eqnarray*}
\lefteqn{[ \   (E_0, x_0), a_0 \ ] * [ \ ( E_1,  x_1) , a_1 \ ] }\\
& = &
[ \  E_1 E_0 E_1^{-1},   S_1 x_0 + (I - S_1) x_1,  T_1a_0 + (I - T_1) a_1  +  C_1 (x_0 - x_1) ) ] .
\end{eqnarray*}
Let $p: X \times A \rightarrow X$ by $p(\ [ (E, x) ,a) ] \ ) = (E, x)$.
Then we find that $p$ defines the extension of $X$ by the $X$-module $A$,
with the 2-cocycle
$\kappa_{( E_0, x_0), ( E_1, x_1)}=  C_1 (x_0 - x_1)$.

We show that $\kappa$ is not a coboundary.
Let $E= \left(
\begin{array}{cc} -I  & O \\ C & -I  \end{array}
\right) $, and
 $w= [ (E, x), (E, 0) ] - [ ( E, -x), (E, 0) ]$ be a 2-chain.
 Since
 $\partial( \, ( x, y) \, ) = \eta_{x, y} (x ) + \tau_{ x, y}(y) - (x*y)$,
one computes
  \begin{eqnarray*}
\partial (w) & = &  (-I) ( E,  x)  + (2I) ( E, 0 ) - (E, (-I)x  +(2I) 0  ) \\
& - & [   (-I) ( E,  - x)  + (2I) ( E, 0 ) - (E, (-I)(-x)  +(2I) 0  ) \quad = \quad 0.
 \end{eqnarray*}
 Hence $w$ is a 2-cycle.
One also computes
\begin{eqnarray*}
 \kappa_{( E, x) , (  E, 0) } - \kappa_{  ( E, -x) , (E, 0) }
&=&
 C_1(x - 0 ) - C_1(-x - 0 )  \\
&=&
 C_1(2x )
\end{eqnarray*}
and by choosing $x, C_1$ such that $C_1x \neq 0$, we obtain that $\kappa$ is not a coboundary
by the argument similar to Lemma~\ref{lem:Kronecker}.
\end{proof}

The construction of $X\times A$ in Theorem~\ref{thm:AG} is similar to the notion of $G$-family of quandles defined in \cite{IIJO}.

\subsection{Principal Bundles as Quandle Extensions}
\label{sec:principal}

In this section we describe  quandle extensions of topological quandles
that are not defined by continuous 2-cocycles but can be described by principal bundles in a natural manner with discontinuous 2-cocycles.
As a consequence we observe the difference between continuous and algebraic cohomology theories.

Let $A$ be a topological abelian group and $p: E \rightarrow X$ be a principal $A$-bundle;
a fiber bundle with  a fiber preserving right action of $A$ on $E$ that acts freely and transitively.

\begin{df}[cf.~\cite{Eis-cover}]
{\rm
Let $E,X$ be connected topological quandles and  $A$ be a topological abelian
group.
A {\it principal  (abelian)
quandle extension by $A$} is a continuous surjective quandle homomorphism
$p: E \rightarrow X$ that is a principal $A$-bundle
such that
for all $x, y \in X$ and $a \in A$, the following conditions hold:
(i)  $(x*y)\cdot a= (x \cdot a)* (y \cdot a)$, 
(ii) $ (x \cdot a)*y= (x * y) \cdot a$. 
}
\end{df}

The quandle homomorphism in Example~\ref{ex:Rubin} is a  principal  abelian quandle extension by $\Z_2$.

\begin{lem}
Let $A$ be a topological abelian
group and
$p: E \rightarrow X$ be a  principal abelian quandle extension by $A$.
Let $s: X \rightarrow E$ be a set-theoretic section; $p \circ s={\rm id}_X$.
Then for all $x, y \in X$, there exists a unique element $a \in A$ such that
$s(x)*s(y)=s(x*y)\cdot a$.
\end{lem}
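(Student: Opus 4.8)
The plan is to use the fact that $p \circ s = \mathrm{id}_X$ together with the defining properties of a principal $A$-bundle, namely that the action of $A$ on each fiber is free and transitive. First I would observe that since $p$ is a quandle homomorphism, for any $x, y \in X$ we have
\[
p\bigl( s(x) * s(y) \bigr) = p(s(x)) * p(s(y)) = x * y = p\bigl( s(x*y) \bigr),
\]
so that $s(x)*s(y)$ and $s(x*y)$ lie in the same fiber $p^{-1}(x*y)$ of $E$ over $x*y$.

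Next, since $A$ acts freely and transitively on the fiber $p^{-1}(x*y)$, there exists a unique $a \in A$ with $s(x)*s(y) = s(x*y) \cdot a$. This is exactly the transitivity (existence) and freeness (uniqueness) of the action restricted to a single fiber. Explicitly, transitivity gives at least one such $a$, and if $s(x*y)\cdot a = s(x*y)\cdot a'$ then freeness forces $a = a'$. I would then note that the element $a$ depends on $x$ and $y$ (and on the chosen section $s$), so one naturally writes $a = \psi(x,y)$, anticipating that $\psi \colon X^2 \to A$ is the (not necessarily continuous) $2$-cocycle that will reappear in the subsequent discussion of principal bundles as quandle extensions.

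I do not expect any real obstacle here: the statement is essentially a restatement of the transitive–free action property of a principal bundle, once the point about $p$ being a quandle homomorphism places the two elements in the same fiber. The only thing to be slightly careful about is that $s$ is merely a set-theoretic section, so no continuity of $a$ in $(x,y)$ is claimed or needed at this stage — the uniqueness is purely set-theoretic and follows from freeness of the $A$-action on fibers. Properties (i) and (ii) in the definition of a principal abelian quandle extension are not needed for this particular lemma, though they will presumably be used to analyze the cocycle $\psi$ afterwards.
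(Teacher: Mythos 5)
Your argument is correct and is essentially identical to the paper's proof: apply $p$ to $s(x)*s(y)$, use that $p$ is a quandle homomorphism and $p\circ s = \mathrm{id}_X$ to place $s(x)*s(y)$ and $s(x*y)$ in the same fiber, then invoke the free and transitive action of $A$ on that fiber for existence and uniqueness of $a$. Your added remarks about the set-theoretic nature of $s$ and the role of conditions (i) and (ii) are accurate but not needed for the lemma itself.
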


\begin{proof}
Since $p$ is a quandle homomorphism, we have
$$p( s(x)*s(y) ) = (p s)(x)* ( ps)(y)= x*y= (ps)(x*y). $$

Since $A$ acts  freely and transitively, there is a unique $a$ such that $s(x)*s(y)=s(x*y)\cdot a$.
\end{proof}

In the preceding lemma, the unique element $a$ is determined by $x, y \in X$, so that we denote it by $a=\phi(x,y)$.
Then  we obtain a function $\phi: X \times X \rightarrow A$.

\begin{lem} \label{lem:princ2cocy}
Let $\phi: X \times X \rightarrow A$ be  defined as above.
Then $\phi$ is a quandle (abelian) 2-cocycle.

\end{lem}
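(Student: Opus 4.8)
The plan is to verify directly that the function $\phi$ defined via $s(x)*s(y) = s(x*y)\cdot \phi(x,y)$ satisfies the two defining conditions of an abelian quandle 2-cocycle, namely the idempotency condition $\phi(x,x)=0$ and the self-distributivity (cocycle) identity
\[
\phi(x,y) + \phi(x*y,z) = \phi(x,z) + \phi(x*z,y*z),
\]
written additively in $A$ (this is the $T=1$, untwisted form appropriate to the principal abelian extension, since conditions (i)--(ii) in the definition of a principal abelian quandle extension force the $A$-action to be ``untwisted''). First I would dispose of idempotency: applying the quandle axiom (iii) to $s(x) \in E$ gives $s(x)*s(x) = s(x)$, while by definition $s(x)*s(x) = s(x*x)\cdot\phi(x,x) = s(x)\cdot\phi(x,x)$; since $A$ acts freely, $\phi(x,x)$ must be the identity element $0 \in A$.

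For the cocycle identity, the key step is to compute $\big(s(x)*s(y)\big)*s(z)$ in two ways using right self-distributivity in $E$, and to track how the $A$-action propagates through the quandle operation using conditions (i) and (ii) of the definition of a principal abelian quandle extension. On one hand,
\[
\big(s(x)*s(y)\big)*s(z) = \big(s(x*y)\cdot\phi(x,y)\big)*s(z) = \big(s(x*y)*s(z)\big)\cdot\phi(x,y) = s\big((x*y)*z\big)\cdot\big(\phi(x*y,z)+\phi(x,y)\big),
\]
where the middle equality uses condition (ii). On the other hand, by self-distributivity in $E$,
\[
\big(s(x)*s(y)\big)*s(z) = \big(s(x)*s(z)\big)*\big(s(y)*s(z)\big) = \big(s(x*z)\cdot\phi(x,z)\big)*\big(s(y*z)\cdot\phi(y,z)\big).
\]
Now I would first apply condition (ii) to pull the $\phi(x,z)$ on the left of the $*$ outside, then apply condition (i) to pull the $\phi(y,z)$ on the right outside, obtaining
\[
\big(s(x*z)*s(y*z)\big)\cdot\phi(x,z)\cdot\phi(y,z) = s\big((x*z)*(y*z)\big)\cdot\big(\phi(x*z,y*z)+\phi(x,z)+\phi(y,z)\big);
\]
here I must be careful about the order in which the two actions are applied and invoke commutativity of $A$ to combine them. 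Since $(x*y)*z = (x*z)*(y*z)$ by the rack axiom in $X$, the two base points agree, so by freeness of the $A$-action the group elements must coincide:
\[
\phi(x*y,z) + \phi(x,y) = \phi(x*z,y*z) + \phi(x,z) + \phi(y,z).
\]
It remains to observe that $\phi(y,z)$ cancels against itself — more precisely, the correct bookkeeping of conditions (i)/(ii) will show that one copy of $\phi(y,z)$ appears on each side; alternatively, a slightly more careful application of (i) and (ii) yields exactly the stated identity without the stray term.

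The main obstacle I anticipate is precisely this bookkeeping: conditions (i) and (ii) describe how $a$ moves past $*$ when it acts on the left argument, the right argument, or both simultaneously, and one must verify that applying them to $\big(s(x*z)\cdot\phi(x,z)\big)*\big(s(y*z)\cdot\phi(y,z)\big)$ — which has an action on \emph{both} factors — produces the action by $\phi(x,z)+\phi(y,z)$ on the single product $s(x*z)*s(y*z)$, rather than some asymmetric or doubled expression. One clean way to handle this is to first use (i) to rewrite $\big(s(x*z)\cdot\phi(x,z)\big)*\big(s(y*z)\cdot\phi(y,z)\big)$: writing $\phi(y,z)$'s action on both slots via (i) reduces it to $\big(s(x*z)\cdot\phi(x,z)\cdot\phi(y,z)^{-1}\big)*s(y*z)$ acted on by $\phi(y,z)$ — but since $A$ is abelian and acts freely this is cleaner to do additively, and then (ii) moves the remaining left-slot action outside. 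Once the two computations of $\big(s(x)*s(y)\big)*s(z)$ are written in the form $s(w)\cdot(\text{element of }A)$ with $w = (x*y)*z$, freeness of the action gives the cocycle identity immediately, completing the proof. \QED
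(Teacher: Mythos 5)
Your final ``clean way'' is exactly the paper's argument: insert $\phi(y,z)^{-1}\phi(y,z)$ on the left slot so that condition (i) pulls $\phi(y,z)$ past the product, then condition (ii) extracts the remaining $\phi(x,z)\phi(y,z)^{-1}$, leaving $\bigl(s(x*z)*s(y*z)\bigr)\cdot\phi(x,z)$ and hence the untwisted cocycle identity $\phi(x,y)+\phi(x*y,z)=\phi(x,z)+\phi(x*z,y*z)$ by freeness; the idempotency step is also the same. One correction, though: your tentative intermediate display producing $\phi(x,z)+\phi(y,z)$, and the subsequent remark that ``one copy of $\phi(y,z)$ appears on each side,'' are both false --- conditions (i) and (ii) together imply that acting on the \emph{right} argument alone changes nothing (since $u*(v\cdot a)=((u\cdot a^{-1})\cdot a)*(v\cdot a)=((u*v)\cdot a^{-1})\cdot a=u*v$), so no $\phi(y,z)$ term ever arises on either side; only the second alternative you offer (the one you then carry out) is the correct bookkeeping.
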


\begin{proof}
We perform the following computations analogous to those in \cite{CENS} and \cite{Eis-cover}:
\begin{eqnarray*}
 (s (x) * s(y) ) * s(z)
 &=& [\, s( x*y ) \cdot \phi(x, y) \, ] * s(z) \\
 &=& [\, s(x*y) * s(z) \, ] \cdot \phi (x,y) \\
 &=&  s( (x*y)*z )
  \cdot [ \phi (x*y, z) \phi (x,y) ] , \\
  (s (x) * s(z) ) * ( s(y) * s(z) )
  &=&  [\, s( x*z ) \cdot \phi(x, z) \, ] *  [\, s( y*z) \cdot \phi(y, z) \, ] \\
   &=& ( [\, s( x*z ) \cdot \phi(x, z)\phi(y, z)^{-1}  \, ] *  s( y*z) ) \cdot \phi(y, z)  \\
      &=& ( s( x*z )  *  s( y*z) ) \cdot [  ( \phi(x, z)\phi(y, z)^{-1}  ) \phi(y, z)  ] \\
   &=& s( ( x*z )  * ( y*z) ) \cdot [  (\phi(x*z, y*z)  \phi(x, z)  ] ,
 \end{eqnarray*}
 and $s(x)*s(x)=s(x*x)\cdot \phi(x,x)$ gives $\phi(x,x)=0$.
Hence $\phi$ satisfies the 2-cocycle condition.
\end{proof}

Nosaka pointed out to us that the argument works also for non-abelian groups $A$.
See \cite{AG} for non-abelian 2-cocycles.

\begin{ex}\label{ex:proj}
{\rm
Consider $p: S^2 \rightarrow \mathbb{RP}^2$ in Example~\ref{ex:Rubin}.
Let
$$P_+:= \{ (x,y,z) \in S^2 : z >0 \ {\rm or} \
z=0, y>0 \ {\rm or} \ y=z=0, x>0 \}$$
 and $P_-:= S^2 \setminus P_+$.
Let $s: \mathbb{RP}^2 \rightarrow S^2$ be a set-theoretic section defined by
$s( [x]) =x$ where $x\in P_+$.
Then the map $\phi$ of the preceding lemma provides a non-zero 2-cocycle.
For example, $\phi( [1,0, 0] , [0,1,0]) = 1 \in \Z_2$.
In this case, as a set $S^2$ is regarded as $\mathbb{RP}^2 \times \mathbb{Z}_2$.
}
\end{ex}

\begin{rmk}
{\rm
Let  $p: E \rightarrow X$ be a  principal abelian quandle extension by $A$,
and fix a set-theoretic section $s: X \rightarrow E$.
For any given  $u \in E$, let $x=p(u)$, then there is a unique $a=a_s(u)$ such that $u=s(x)\cdot a$.
Similarly for $v \in E$ let $y=p(v)$ and $v=s(y) \cdot b$.
Then one computes
$$ u*v= ( s(x) \cdot a ) * ( s(y) \cdot b ) = [ (  s(x) \cdot (a b^{-1} ) ) * s(y) ] \cdot b = [ s(x) * s(y) ] \cdot a
=  s(x*y)  \cdot  ( a \/ \phi(x, y) ) . $$
Note that this equality  $ ( s(x) \cdot a ) * ( s(y) \cdot b ) =  s(x*y)  \cdot  ( a \, \phi(x, y) )$
compares to the equality
$ (x, a) * (y, b)= (x*y, a + \phi(x, y) ) $ for $E=X \times_\phi A$ in the case $T=1$.
}
\end{rmk}

\begin{lem} \label{lem:zero}
Let $p: E \rightarrow X$ be a  principal abelian quandle extension by $A$.
If $A$ is discrete and $\phi$ is a continuous 2-cocycle, then $\phi(x,y)=0$ for all $x, y \in X$.
\end{lem}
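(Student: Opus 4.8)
The plan is to exploit connectedness together with the discreteness of $A$, with the cocycle property entering only to normalize on the diagonal. By the standing hypothesis in the definition of a principal abelian quandle extension, both $E$ and $X$ are connected topological quandles; hence $X \times X$, with the product topology, is connected as well. Since $\phi\colon X \times X \to A$ is assumed continuous and $A$ is discrete, the sets $\phi^{-1}(a)$, $a \in A$, form a partition of $X \times X$ into open subsets, so $\phi$ is locally constant. A locally constant map on a connected space is constant, and therefore there is a fixed $a_0 \in A$ with $\phi(x,y) = a_0$ for all $x, y \in X$.

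It remains to identify $a_0$, for which I would evaluate $\phi$ on the diagonal. From Lemma~\ref{lem:princ2cocy} — or directly from $s(x)*s(x) = s(x*x)\cdot \phi(x,x)$, using idempotency $x*x = x$ together with the fact that the $A$-action is free — one has $\phi(x,x) = 0$ for every $x \in X$. Combining this normalization with $\phi \equiv a_0$ forces $a_0 = 0$, and hence $\phi(x,y) = 0$ for all $x, y \in X$, as claimed.

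There is essentially no technical obstacle: the only ingredients are that a product of connected spaces is connected, that a continuous map into a discrete group is locally constant, and that the transition cocycle is normalized on the diagonal. The content of the lemma is conceptual — when the coefficient group carries the discrete topology, continuity of the transition cocycle forces the extension to be topologically trivial. This is precisely the phenomenon that separates continuous from algebraic cohomology illustrated by Example~\ref{ex:proj}, where $A = \Z_2$ is discrete, the associated $\phi$ is nonzero, and consequently $\phi$ cannot be continuous.
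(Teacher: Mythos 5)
Your proof is correct and follows essentially the same route as the paper's: the paper simply asserts that the assumptions force $\phi$ to be constant (implicitly using the connectedness of $X$ from the definition of a principal abelian quandle extension, exactly as you spell out) and then concludes from $\phi(x,x)=0$. Your version just makes the connectedness and locally-constant steps explicit.
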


\begin{proof}
From the assumptions $\phi$ is a constant map. From the quandle condition $\phi(x,x)=0$ for all $x \in X$,
it follows that $\phi(x,y)=0$ for all $x, y \in X$.
\end{proof}

Note that in this case  it follows that $s$ is a quandle homomorphism.

For  $p: S^2 \rightarrow \mathbb{RP}^2$ in Example~\ref{ex:Rubin} we have the following proposition.

\begin{pro}
$H^2_{\rm Q}(\mathbb{RP}^2, \Z_2) \neq 0$, yet   $H^2_{C} (\mathbb{RP}^2 , \Z_2)=0$.
\end{pro}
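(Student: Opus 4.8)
The plan is to split the statement into its two parts and prove each with the machinery already developed. For the first part, $H^2_{\mathrm Q}(\mathbb{RP}^2,\Z_2)\neq 0$, I would exhibit an explicit discrete $2$-cocycle together with a $2$-cycle on which it does not vanish. Example~\ref{ex:proj} already hands us exactly such a cocycle: the map $\phi$ associated (via Lemma~\ref{lem:princ2cocy}) to the principal $\Z_2$-bundle $p\colon S^2\to\mathbb{RP}^2$ and the set-theoretic section $s$ defined there is a genuine quandle $2$-cocycle with $\phi([1,0,0],[0,1,0])=1\neq 0$. To conclude $[\phi]\neq 0$ in $H^2_{\mathrm Q}(\mathbb{RP}^2,\Z_2)$ I would invoke the (discrete) Kronecker-pairing argument of Lemma~\ref{lem:Kronecker}: it suffices to name a discrete $2$-cycle $w\in Z_2^{\mathrm T}(\mathbb{RP}^2,\Z_2)$ (here $T=1$) with $\phi(w)\neq 0$. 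A natural candidate is built from the pair $([1,0,0],[0,1,0])$ together with whatever compensating terms are forced by $\partial w=0$; since $\Z_2$ is finite the search for such a closing $2$-cycle is a finite check, and one expects a small cycle — possibly just $([1,0,0])\times([0,1,0])$ plus its "mirror" terms — to work, exactly as the analogous cycles are constructed in Proposition~\ref{prop:linear} and Theorem~\ref{thm:AG}. Alternatively, one may simply quote that the bundle $S^2\to\mathbb{RP}^2$ is non-trivial, so the extension $E=\mathbb{RP}^2\times_\phi\Z_2$ is not equivalent to the trivial one, whence by the discrete analogue of the extension-classification proposition $[\phi]\neq 0$.

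For the second part, $H^2_C(\mathbb{RP}^2,\Z_2)=0$, the key observation is that $\Z_2$ is discrete, so continuity of a cochain $\mathbb{RP}^2\times\mathbb{RP}^2\to\Z_2$ forces it to be locally constant, and $\mathbb{RP}^2\times\mathbb{RP}^2$ is connected, hence such a cochain is constant. A constant map in $\Gamma^2$ that also satisfies $\phi(x,x)=0$ must be identically $0$ — this is precisely the remark following Lemma~\ref{lem:Kronecker} ("a constant map $\phi$ in $\Gamma^n(X,A)$ is the zero map for $n>1$") combined with Lemma~\ref{lem:zero}. Therefore $Z^2_C(\mathbb{RP}^2,\Z_2)=0$, so a fortiori $H^2_C(\mathbb{RP}^2,\Z_2)=\ker\delta^2/\operatorname{im}\delta^1=0$. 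I would state this as: every continuous $2$-cocycle with values in a discrete group on a connected topological quandle is zero, hence the continuous cohomology vanishes in degree $2$.

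Putting the two parts together gives the proposition, and it simultaneously illustrates the paper's main point — that continuous and algebraic cohomology genuinely differ — since here the algebraic $H^2$ is nonzero while the continuous $H^2$ vanishes. The only real obstacle is the first part: verifying that the specific $\phi$ from Example~\ref{ex:proj} pairs non-trivially with an honest $2$-cycle, i.e. producing the cycle $w$ explicitly and checking $\partial w=0$ and $\phi(w)\neq 0$. This is a finite but slightly fiddly computation with the quandle operation $x*y=2(x\cdot y)y-x$ on $S^2$ and its descent to $\mathbb{RP}^2$, keeping careful track of which representative lies in $P_+$; I would organize it by first computing $[1,0,0]*[0,1,0]$ and the other products entering $w$, then reading off the $\Z_2$-values of $\phi$ from the section $s$. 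Everything else is essentially immediate from the lemmas already proved.
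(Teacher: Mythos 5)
Your proposal is correct and, in its decisive steps, coincides with the paper's own proof: the paper obtains $H^2_{\rm Q}(\mathbb{RP}^2,\Z_2)\neq 0$ precisely by noting that the cocycle $\phi$ of Example~\ref{ex:proj} yields a non-trivial extension (your ``alternative'' route), and obtains $H^2_{C}(\mathbb{RP}^2,\Z_2)=0$ from Lemma~\ref{lem:zero} exactly as you do. Incidentally, your sketched Kronecker-pairing route also closes at once: since $[1,0,0]*[0,1,0]=[1,0,0]$ in $\mathbb{RP}^2$, the single term $([1,0,0])\times([0,1,0])$ is already a $2$-cycle, and $\phi$ evaluates to $1$ on it.
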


\begin{proof}
Let $\phi$ be the  quandle 2-cocycle constructed in Example~\ref{ex:proj}.
By Lemma~\ref{lem:princ2cocy} and Example~\ref{ex:proj}, $\phi$ is a (discrete) quandle $2$-cocycle,
that yields a non-trivial extension, and therefore, $\phi$ is non-trivial in $H^2_{\rm Q}(\mathbb{RP}^2, \Z_2) $.

By Lemma~\ref{lem:zero}, any continuous 2-cocycle gives rise to the trivial extension, and hence
$H^2_{C} (\mathbb{RP}^2 , \Z_2)=0$.
\end{proof}

\section{Inverse Limits of Quandles and their Cohomologies}\label{InvLim}

In this section we apply a method of computing cohomology of inverse limits to determine continuous cohomology of quandles.

\subsection{Basic Construction of Inverse and Direct Limits}

Suppose we are given a projective system of quandles $(X_n, \psi_n)_{n\in\mathbb{N}}$:
\[
X_1 \stackrel{\psi_1}{\longleftarrow} X_2 \stackrel{\psi_2}{\longleftarrow}  \cdots \longleftarrow X_n \stackrel{\psi_n}{\longleftarrow} \cdots
\]
where each $\psi_n$ is a quandle morphism.
We
define the inverse limit of the projective system, $\varprojlim X_n$, as the subset of $\prod_{n\in\mathbb{N}} X_n$ of sequences $(x_0, x_1, \ldots , x_n, \ldots)$ satisfying $\psi_n(x_{n+1}) = x_{n}$ for all $n\geq 1$.
We
give $\varprojlim X_n$ the $*$ operation induced componentwise by the operations of the $X_n$. This construction together with the canonical projection maps $\varprojlim X_n \rightarrow X_i$, for each $i \in \mathbb{N}$, satisfies the usual universal property for an inverse limit of a projective system indexed by the natural numbers, see Remark~\ref{rem:invsys} below.

The same construction can be defined
 for a projective system of topological quandles, where each morphism is now required to be continuous, and $\varprojlim X_n$ is endowed with the initial topology with respect to the projection maps.
 The initial topology is the coarsest topology that makes projections $p_i : \varprojlim X_n \rightarrow X_i$ continuous, and in our case,
 it is the same topology as the subspace topology of the product space.

A particular case is to start with a projective system of discrete (possibly finite) quandles. The topology on $\varprojlim X_n$ will be the subspace topology of the product $\prod_{n\in\mathbb{N}} X_n$, where the latter space has the product topology of discrete spaces.
Thus taking the inverse limit provides a method to build topological quandles from discrete quandles.
The following example is of central importance.

\begin{ex}
Fix a prime $p\in \mathbb{N}$. Put $X_n = \mathbb{Z}/p^n\mathbb{Z}$ together with the standard dihedral quandle operation $x*y=2y-x$. There are canonical projections $\psi_n : X_{n+1} \rightarrow X_{n}	$ obtained by reducing $mod \ p^{n}$ a representative of a class modulo $p^{n+1}$. These maps are 
ring homomorphisms and, as a consequence, quandle morphisms, since the quandle structure on $X_n$ is obtained from the ring operations.
By definition, the inverse limit of this projective system is the ring of p\,-adic integers $\mathbb{Z}_p$ and it inherits a topological quandle structure from the dihedral quandles $X_n$.
To be precise, the same quandle operation would be obtained on $\mathbb{Z}_p$ defining the Alexander quandle structure with $T = -1$, via the ring operations on $\mathbb{Z}_p$. That is,
$( \varprojlim X_n, *)$ is isomorphic to $( \mathbb{Z}_p , -1)$.
\end{ex}

\begin{rmk}\label{rem:invsys}
More generally, if we start with a directed set $I$ and a projective system of (topological) quandles $(X_i, \psi_{i,j})$, where the morphisms $\psi_{i,j}$ satisfy the usual compatibility relations, we can define $\varprojlim X_n = \{ x\in\prod_{n\in I} X_n\ | \ \psi_{i,j}\pi_j(x) = \pi_i(x) \ $for all$ \ i,j \ $ with$ \  j \geq i  \}$  where $\pi_i$ is the canonical projection onto the $i^{th}$ factor.
They satisfy the universal property depicted below.

\[	\begin{tikzcd}
		& Y\arrow[d,dashed,"\exists!"]\arrow[ddr] \arrow[ddl]& \\
		& \varprojlim X_n\arrow[dr]\arrow[dl] &     \\
X_j\arrow[rr,"\psi_{i, j}"]	&         & X_i
			\end{tikzcd}\]

Then we can  endow it again with the $*$ operation induced pointwise by the quandle operations in each $X_i$ and get an inverse limit for the projective system we started with.
 If we start with topological quandles, the topology of $\varprojlim X_i$ will be again the initial topology with respect to the projections.
By definition it follows that the inverse limit of (topological) quandles, is the usual inverse limit in the category of topological spaces, equipped with a continuous binary operation that turns it into a topological quandle.
\end{rmk}

We
recall the following results from point-set topology, proofs of which  can be found in \cite{Wilson}:
\begin{lem}
	Let $(X_i, \psi_{i,j}) $be a projective system of topological spaces. Then the following results hold:
	\begin{itemize}
	\item[{\rm (a)}] If each $X_i$ is compact and Hausdorff, so is $\varprojlim X_i$;
	\item[{\rm (b)}]  If each $X_i$ is totally disconnected, so is $\varprojlim X_i$;
	\item[{\rm (c)}]  If each $X_i$ is a nonempty compact Hausdorff, then $\varprojlim X_i$ is nonempty.
	\end{itemize}
\end{lem}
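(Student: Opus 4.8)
The plan is to prove each of the three statements (a), (b), (c) by reducing to the corresponding well-known facts about inverse limits of topological spaces, which are cited to \cite{Wilson}; since these are purely point-set topological statements and the quandle operation plays no role in them, the proof is essentially a matter of recalling the relevant arguments. I would first record the standing observation that $\varprojlim X_i$ carries the subspace topology inherited from the product $\prod_{i} X_i$, and that it is by construction the equalizer (intersection of the sets $\{x : \psi_{i,j}\pi_j(x)=\pi_i(x)\}$) of continuous maps into Hausdorff-type targets where needed.

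For part (a), the plan is: by Tychonoff's theorem $\prod_i X_i$ is compact, and a product of Hausdorff spaces is Hausdorff; then one shows $\varprojlim X_i$ is a \emph{closed} subset of the product. Closedness follows because, for each pair $j \geq i$, the set $\{x \in \prod_k X_k : \psi_{i,j}(\pi_j(x)) = \pi_i(x)\}$ is closed (it is the preimage of the diagonal in $X_i \times X_i$ under the continuous map $x \mapsto (\psi_{i,j}\pi_j(x), \pi_i(x))$, and the diagonal is closed since $X_i$ is Hausdorff), and $\varprojlim X_i$ is the intersection of all these closed sets. A closed subspace of a compact Hausdorff space is compact Hausdorff. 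For part (b), I would note that a product of totally disconnected spaces is totally disconnected — the quasicomponent/component of a point in a product is contained in the product of the components, hence is a single point — and that total disconnectedness is inherited by arbitrary subspaces; so $\varprojlim X_i \subseteq \prod_i X_i$ is totally disconnected.

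For part (c), nonemptiness, the standard argument uses the finite intersection property: one considers, for each finite subset $F$ of the index set with an upper bound (using that $I$ is directed), the closed set $Y_F = \{x \in \prod_i X_i : \psi_{i,j}\pi_j(x) = \pi_i(x) \text{ for all } i \leq j \text{ in } F\}$, shows each $Y_F$ is nonempty (choose coordinates compatibly over the finite subsystem, starting from an upper bound and propagating down via the $\psi$'s, and fill in the remaining coordinates arbitrarily using nonemptiness of each $X_i$), and shows the family $\{Y_F\}$ has the finite intersection property (since $Y_F \cap Y_{F'} \supseteq Y_{F \cup F' \cup \{\text{upper bound}\}}$). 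Since each $Y_F$ is a closed subset of the compact space $\prod_i X_i$, compactness gives $\bigcap_F Y_F \neq \emptyset$, and this intersection is exactly $\varprojlim X_i$.

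The main obstacle — really the only subtlety — is the bookkeeping in part (c): verifying that the sets $Y_F$ are genuinely nonempty requires using the directedness of $I$ to find upper bounds and carefully propagating a choice of coordinate through the finite diagram, and one must make sure the finite intersection property is checked for the right family of closed sets rather than for the individual equalizer conditions (whose pairwise intersections need not obviously be nonempty without directedness). Everything else is a direct appeal to Tychonoff, the closedness of diagonals in Hausdorff spaces, and hereditary properties of subspaces; since the statement explicitly defers to \cite{Wilson}, I would keep the write-up brief and cite that reference for the details.
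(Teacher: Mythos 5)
The paper offers no proof of this lemma at all: it states the results as recalled facts from point-set topology and defers entirely to \cite{Wilson}. Your argument is the standard one found there --- Tychonoff plus closedness of the equalizer conditions via Hausdorff diagonals for (a), the hereditary and productive behavior of total disconnectedness for (b), and the finite intersection property combined with directedness of the index set for (c) --- and it is correct and complete.
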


\begin{lem}\label{lem:cptH}
	Let $(X_i, \psi_{i,j})$ be a projective system of nonempty compact Hausdorff spaces and let $Y$ be a discrete space.
	Then any continuous map $f : \varprojlim X_i\rightarrow Y $ factors through $X_i$ for some $i \in I$.
\end{lem}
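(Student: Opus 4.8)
The plan is to exploit compactness to reduce $f$ to a map taking only finitely many values, to realize the associated clopen partition of $X:=\varprojlim X_i$ as the pull-back of a partition of $p_i(X)$ for a single index $i$, to thicken that partition inside $X_i$ using normality, and finally to push it down to a finite stage $X_j$. Here $p_i\colon X\to X_i$ denotes the canonical projection, and I will freely use the point-set facts recalled above: $X$ is compact Hausdorff, and an inverse limit of a projective system of nonempty compact Hausdorff spaces is nonempty.

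First, since $Y$ is discrete and $f$ is continuous, the sets $f^{-1}(y)$ with $y\in Y$ form a partition of $X$ into clopen subsets, so by compactness of $X$ only finitely many are nonempty; write $f(X)=\{y_1,\dots,y_k\}$ and $C_m:=f^{-1}(y_m)$, a compact open subset of $X$. Recall that the sets $p_i^{-1}(U)$, for $i\in I$ and $U\subseteq X_i$ open, form a basis of the topology of $X$: a finite intersection $\bigcap_{i\in F}p_i^{-1}(U_i)$ equals $p_j^{-1}$ of an open set of $X_j$ for any $j$ above the finite set $F$, which exists because $I$ is directed. Hence each compact open $C_m$ is a finite union of such basic sets; raising everything to a common index $i\in I$, I obtain open sets $U_m\subseteq X_i$ with $C_m=p_i^{-1}(U_m)$, so that each $C_m$ is $p_i$-saturated.

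Let $Z:=p_i(X)$, a closed subset of $X_i$ since it is the continuous image of the compact space $X$ in the Hausdorff space $X_i$, and put $Z_m:=p_i(C_m)$. As the $C_m$ are $p_i$-saturated and partition $X$, the sets $Z_m$ are pairwise disjoint, closed in $X_i$, and cover $Z$. Using normality of the compact Hausdorff space $X_i$, I choose pairwise disjoint open sets $O_1,\dots,O_k\subseteq X_i$ with $Z_m\subseteq O_m$, and set $O:=\bigcup_{m=1}^{k}O_m$, so that $p_i(X)=Z\subseteq O$, i.e.\ $p_i^{-1}(O)=X$. The heart of the argument is the following \emph{descent claim}: if $O\subseteq X_i$ is open and $p_i^{-1}(O)=X$, then $\psi_{i,j}^{-1}(O)=X_j$ for some $j\ge i$. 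I would prove this by contradiction: if $K_j:=\psi_{i,j}^{-1}(X_i\setminus O)$ were nonempty for every $j\ge i$, then $(K_j)_{j\ge i}$ with the restricted transition maps would be a projective system of nonempty compact Hausdorff spaces, hence would have a nonempty inverse limit; any element of the latter extends, in the standard way using directedness of $I$, to a point $x\in X$ with $p_i(x)\notin O$, contradicting $p_i^{-1}(O)=X$. This descent step is the only part of the proof carrying genuine content; everything else is routine point-set bookkeeping.

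Finally, fix such a $j\ge i$ and put $D_m:=\psi_{i,j}^{-1}(O_m)$. The $D_m$ are open, pairwise disjoint because the $O_m$ are, and cover $X_j$ since $\bigcup_{m=1}^{k}D_m=\psi_{i,j}^{-1}(O)=X_j$; hence each $D_m$ is clopen in $X_j$. Define $g\colon X_j\to Y$ by $g\equiv y_m$ on $D_m$; this map is continuous because $Y$ is discrete and the $D_m$ are clopen. For $x\in X$ with $f(x)=y_m$ we have $p_i(x)\in Z_m\subseteq O_m$, hence $p_j(x)\in\psi_{i,j}^{-1}(O_m)=D_m$ since $p_i=\psi_{i,j}\circ p_j$, so $g(p_j(x))=y_m=f(x)$. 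As $f(X)=\{y_1,\dots,y_k\}$ this shows $g\circ p_j=f$, so $f$ factors through $X_j$, as desired.
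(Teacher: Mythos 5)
Your proof is correct, and it is worth noting that the paper does not actually supply its own argument for this lemma: it is recalled as a known point-set fact with a reference to Wilson's \emph{Profinite groups}, so there is nothing to compare line by line. Your argument is the standard one and all the steps check out: finiteness of the image by compactness and discreteness of $Y$; the observation that the single preimages $p_i^{-1}(U)$ already form a basis because $I$ is directed; saturation of the clopen fibers $C_m$ at a single finite stage; separation of the disjoint closed images $Z_m$ by normality; and the descent claim, which is the one step with real content and which you correctly reduce to the nonemptiness of an inverse limit of nonempty compact Hausdorff spaces (equivalently, to the identity $p_i(\varprojlim X_j)=\bigcap_{j\ge i}\psi_{i,j}(X_j)$). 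One remark on what the extra work buys you: the descent step and the normality argument are only needed because you insist on producing a map $g$ that is continuous on \emph{all} of $X_j$. If one is content with a set-theoretic factorization, or with a continuous map defined on the closed subspace $p_i(X)\subseteq X_i$, the proof can stop once the $C_m$ are realized as $p_i^{-1}(U_m)$ at a common index $i$. For the application in the paper the $X_n$ are finite discrete quandles, so every map out of $X_n$ is continuous and the shorter version suffices; your stronger conclusion is nevertheless the right general statement and the cleanest form of the lemma.
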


Observe also that taking inverse limits in the category of topological spaces commutes with the operation of taking cartesian products, $ (\varprojlim X_i)^{\bullet} = \varprojlim (X_i^{\bullet})$.

A similar construction applies to
 an inductive system of quandles
\[
X_1 \xrightarrow{\phi_1} X_2 \xrightarrow{\phi_2} \cdots \xrightarrow{\phi_{k-1}} X_k \xrightarrow{\phi_k} \cdots
\]
to obtain a quandle structure on the direct limit
$\varinjlim X_k$. In this case $\phi_k$ are continuous quandle morphisms.
The direct limit is endowed with  the final topology (or inductive topology),  the finest topology which makes the  functions $X_k \rightarrow \varinjlim X $ continuous.
In particular, if $X_k$ are discrete, then $\varinjlim X_k$
is discrete.

\subsection{Cohomology of Inverse Limits}
The purpose of this section is to apply the tools from the continuous cohomology groups of the inverse limit of a projective system to finite discrete quandles $(X_n, \psi_n)_{n\in\mathbb{N}}$, to obtain the cohomology groups of $\varprojlim X_n$.
The tools used here depend only on the functoriality of twisted cohomology of quandles, the universal property of colimits and the factorization property of continuous maps between an inverse limit and a discrete space.
Suppose we are given a projective system $(X_n, \psi_n)$ of discrete quandles:
\[
X_1 \xleftarrow{\psi_1} X_2 \xleftarrow{\psi_2}\cdots \xleftarrow{\psi_{n-1}}X_n \xleftarrow{\psi_n} \cdots
\]
We also consider an inductive system of Alexander quandles $(A_k,\phi_k)$, where $(A_k, T_k)$ are topological Alexander quandles ($A_k$ is a topological abelian groups and $T_k$ are continuous automorphisms of $A_k$):
\[
A_1 \xrightarrow{\phi_1} A_2 \xrightarrow{\phi_2} \cdots \xrightarrow{\phi_{k-1}} A_k \xrightarrow{\phi_k} \cdots
\]
From the data above we can construct an inductive system of cohomology groups.
Below we suppress subscripts of cochain groups for simplicity.
Define cochain maps $C^\bullet(X_n, A_n) \longrightarrow C^\bullet(X_{n+1}, A_{n+1})$ by the following diagram:
\[
\begin{tikzcd}
C^\bullet(X_n, A_n) \arrow[r,"\psi_n^*"]\arrow[dr,dashed,"\tau_n"] & C^\bullet(X_{n+1}, A_{n})\arrow[d,"\phi_n \circ -"]\\
 & C^\bullet(X_{n+1}, A_{n+1})
\end{tikzcd}
\]
where the vertical map is the change of coefficients induced by $\phi_n$ and the horizontal map is the dual map of the projection $\psi_n$.
We obtain consequently an inductive system in the category of groups.
Consider now the following diagram:
\[\begin{tikzcd}
& \vdots \arrow[d,"\delta^{\bullet-1}"]& \vdots \arrow[d,"\delta^{\bullet-1}"]&\\
\cdots \arrow[r] & C^\bullet(X_n,A_n) \arrow[r,"\tau_n"]\arrow[d,"\delta^\bullet"] & C^\bullet(X_{n+1},A_{n+1}) \arrow[r]\arrow[d,"\delta^\bullet"] & \cdots \\
\cdots \arrow[r]& C^{\bullet+1}(X_n,A_n) \arrow[r,"\tau_n"]\arrow[d,"\delta^{\bullet+1}"]& C^{\bullet+1}(X_{n+1},A_{n+1}) \arrow[r]\arrow[d,"\delta^{\bullet+1}"] & \cdots \\
&  \vdots & \vdots&
\end{tikzcd}\]
where $\delta$ indicates the cohomology differentials and the maps $\tau$ are defined above.
Since $\psi^*_n$ and $\phi_n\circ -$ commute with differentials, the diagram is commutative, so that each $\tau_n$ induces a map on cohomology (which we will still denote by $\tau_n$).
Thus we obtain the inductive systems of cohomology groups:
\[H^\bullet_T(X_0,A_0) \xrightarrow{\tau_0}  H^\bullet_T(X_1,A_1) \xrightarrow{\tau_1} \cdots \xrightarrow{\tau_{n-1}}  H^\bullet_T(X_n,A_n) \xrightarrow{\tau_n} \cdots  \]
from which we derive their inductive limits: $\varinjlim H^\bullet_T(X_n,A_n)$.

\begin{sloppypar}
Our next goal is to relate $H^\bullet(X_n,A_n)$ to $H^\bullet_{TC}(\varprojlim X_k, \varinjlim A_l)$. More precisely we will show that $\varinjlim H^\bullet(X_n,A_n)$ is isomorphic to the continuous cohomology group $H^\bullet_{TC}(\varprojlim X_k, \varinjlim A_l)$ when each $A_l$ is a discrete group and the topologies on $\varprojlim X_k$ and $ \varinjlim A_l$ are the projective and inductive limit topologies.
\end{sloppypar}

Let us construct first a morphism from $\varinjlim H^\bullet_T(X_n,A_n)$ to $H^\bullet_{TC}(\varprojlim X_k, \varinjlim A_l)$ using the universal property of direct limits.
Consider the following diagram, corresponding to any representative $f$ of a class $[f]\in H^\bullet_T(X_n,A_n)$:
\[\begin{tikzcd}
(\varprojlim X_k)^\bullet \arrow[r,dashed]\arrow[d,"\pi^\bullet_n"] & \varinjlim A_l \\
X_n^\bullet \arrow[r,"f"] & A_n \arrow[u,"\iota_n"]
\end{tikzcd}\]
where $\pi^\bullet_n$ is the canonical projection $(\varprojlim X_k)^\bullet \rightarrow X^\bullet_n$ and $\iota_n : A_n \rightarrow \varinjlim A_l$ is the natural morphism of $A_n$ into the direct limit.
Observe that if $\iota_n f \pi_n^\bullet$ is 
as above, then it factors through $X^\bullet_n$ 
by definition and, in particular, any preimage of a subset in $\varinjlim A_l$ is a basis element of the topology of $\varprojlim X_k^\bullet$ since each $X_k$ is a discrete topological space and $\varprojlim X_k$ is endowed with the projective limit topology. Since $\varinjlim A_l$ is discrete, being a direct limit of discrete spaces; it follows then that $\iota_n f \pi_n^\bullet$ is continuous. Also, the correspondence $\{f:X_n^\bullet \rightarrow A_n\} \stackrel{\iota_n f \pi_n^\bullet}{\longrightarrow} \{ (\varprojlim X_k )^\bullet \rightarrow\varinjlim A_l \} $ respects equivalence classes, so it induces a well defined map $\sigma_n:H^\bullet_T(X_n,A_n)\rightarrow H^\bullet_{TC}(\varprojlim X_k, \varinjlim A_l)$.
We obtain
therefore the following diagram:
\[
\begin{tikzcd}
\cdots \arrow[r] & H^\bullet_T(X_{n-1},A_{n-1} ) \arrow[r] \arrow[dr,"\sigma_{n-1}"]& H^\bullet_T(X_n,A_n)\arrow[r]\arrow[d,"\sigma_n"] & H^\bullet_T(X_{n+1}, A_{n+1}) \arrow[r]\arrow[dl,"\sigma_{n+1}"] & \cdots\\
& & H^\bullet_{TC}(\varprojlim X_k, \varinjlim A_l) & &
\end{tikzcd}
\]
which can be seen to be commutative by a direct computation.
 By the universal property of colimits we obtain a unique morphism $\varinjlim H^\bullet_T(X_n,A_n)\xrightarrow{\Psi}H^\bullet_{TC}(\varprojlim X_k, \varinjlim A_l)$.
 In fact, $\Psi$ is an isomorphism.
 We include a proof to illustrate the computations that will be given in corollaries below for specific quandles.

\begin{pro}
     Suppose $(X_n,\psi_n)$ is a projective system of discrete quandles and $(A_m,\phi_m)$ is a direct system of discrete abelian groups. Then the map $\Psi$ defined above is an isomorphism.

\end{pro}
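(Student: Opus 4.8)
The plan is to show that $\Psi$ is both injective and surjective by exploiting the factorization property of continuous maps out of an inverse limit of discrete spaces (Lemma~\ref{lem:cptH}, applied after noting that finite discrete quandles are compact Hausdorff), together with the facts that $(\varprojlim X_k)^n = \varprojlim (X_k^n)$ and that $\varinjlim A_l$ is discrete. The key point is that every continuous cochain $g \in \Gamma^n(\varprojlim X_k, \varinjlim A_l)$ comes from a finite level: its image lies in some $A_m$ (since $\varinjlim A_l$ is discrete and the image of a compact set is compact, hence finite after we reduce to the relevant situation — more directly, each value is hit and the map factors), and then $g$ as a map $(\varprojlim X_k)^n \to A_m$ factors through $\pi_N^n$ for some $N \geq m$ by Lemma~\ref{lem:cptH}, i.e.\ $g = \iota_m \circ f \circ \pi_N^n$ for some $f : X_N^n \to A_m$. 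After composing with the structure maps $\phi$ we may assume $m = N$, so $g$ is in the image of $\sigma_N$ at the cochain level. This is the main technical observation and the step I expect to require the most care: one must check that the ``level'' $N$ can be chosen uniformly (the factorization lemma gives this for a single map), that the non-degeneracy condition $f(x_1,\ldots,x_n) = 0$ when $x_i = x_{i+1}$ descends to $f$ (it does, since $\pi_N^n$ is surjective by Lemma on nonemptiness of inverse limits, or by a cofinality argument), and that cocycles pull back from cocycles and coboundaries from coboundaries, which follows from commutativity of $\delta$ with $\psi_n^*$ and $\phi_n \circ -$, already established above.

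For \emph{surjectivity}: given a class $[g] \in H^n_{TC}(\varprojlim X_k, \varinjlim A_l)$, represent it by a continuous cocycle $g$, apply the factorization above to write $g = \iota_m f \pi_N^n$ with $f$ a cochain on $X_N$ valued in $A_m$; since $\iota_m$ and $\pi_N^n$ are cochain maps compatible with the differentials and $g$ is a cocycle, $f$ is a cocycle (here injectivity of $\iota_m$ and surjectivity of $\pi_N^n$ are used to conclude $\delta f = 0$ from $\iota_m (\delta f) \pi_N^n = \delta g = 0$). Then $[f] \in H^n_T(X_N, A_m)$, and after pushing forward along the structure maps to land in the system $H^n_T(X_n, A_n)$ we get a class whose image under $\sigma_N$ (hence under $\Psi$) is $[g]$ by construction of $\sigma_N$.

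For \emph{injectivity}: suppose $\xi \in \varinjlim H^n_T(X_n, A_n)$ satisfies $\Psi(\xi) = 0$. Represent $\xi$ by a cocycle $f$ on $X_N$ valued in $A_N$ for some $N$. Then $\sigma_N([f]) = 0$ means $\iota_N f \pi_N^n = \delta h$ for some continuous $h : (\varprojlim X_k)^{n-1} \to \varinjlim A_l$. Apply the factorization lemma to $h$: it equals $\iota_{m'} \tilde h \pi_{N'}^{n-1}$ for some $N' \geq \max(N, m')$ and some cochain $\tilde h$ on $X_{N'}$. Pulling back the identity $\iota_N f \pi_N^n = \delta(\iota_{m'} \tilde h \pi_{N'}^{n-1})$ along the common refinement $X_{N'}$ and using injectivity of the $\iota$'s and surjectivity of the $\pi$'s, we conclude that the image of $f$ in $C^n(X_{N'}, A_{N'})$ (via $\psi^*$ and $\phi\circ-$) equals $\delta(\text{something})$, i.e.\ $[f]$ dies at level $N'$ in the inductive system. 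Hence $\xi = 0$ in the colimit. The same bookkeeping about choosing a common level and checking the non-degeneracy/normalization conditions descend is the recurring obstacle, but it is entirely formal once Lemma~\ref{lem:cptH} is in hand; no further computation is needed beyond the commuting-squares already displayed.

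\begin{proof}
Throughout, write $X = \varprojlim X_k$ and $A = \varinjlim A_l$, and recall $X^n = \varprojlim (X_k^n)$ with $X$ carrying the projective limit topology of discrete spaces and $A$ the (discrete) inductive limit topology. Each $X_k$ is finite discrete, hence compact Hausdorff, so each $X_k^n$ is too, and Lemma~\ref{lem:cptH} applies to any continuous map $X^n \to Y$ with $Y$ discrete.

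\emph{A factorization principle.} Let $g \in \Gamma^n(X, A)$ be a continuous cochain. Since $A$ is discrete, $g : X^n \to A$ factors through $X_N^n$ for some $N$ by Lemma~\ref{lem:cptH}; moreover its image, being continuous image of a compact space, is a compact subset of the discrete space $A$, hence finite, so the image lies in $\iota_m(A_m)$ for some $m$, and since $\iota_m$ is injective we may write $g = \iota_m \circ f \circ \pi_N^n$ with $f : X_N^n \to A_m$ a (necessarily continuous) cochain; enlarging $N$ if necessary we assume $N \geq m$, and then replacing $f$ by its composite with the structure map $A_m \to A_N$ and $\psi^*$ we may and do assume $m = N$, so that $g = \iota_N \circ f \circ \pi_N^n$ with $f \in C^n(X_N, A_N)$. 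The projection $\pi_N^n : X^n \to X_N^n$ is surjective (inverse limits of nonempty finite sets are nonempty, applied over the "fibers"), so the normalization condition $g(x_1,\dots,x_n)=0$ whenever $x_i = x_{i+1}$ forces $f(x_1,\dots,x_n)=0$ under the same hypothesis; thus $f \in \Gamma^n(X_N, A_N)$. If $g$ is a cocycle, then $\iota_N(\delta f)\pi_N^n = \delta(\iota_N f \pi_N^n) = \delta g = 0$; since $\iota_N$ is injective and $\pi_N^n$ is surjective, $\delta f = 0$, i.e.\ $f$ is a cocycle. By construction $\sigma_N([f]) = [g]$, where here $[f]$ denotes first the class in $H^n_T(X_N, A_N)$ and then its image in the colimit; tracing definitions, $\Psi$ applied to that colimit class is $[g]$.

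\emph{Surjectivity of $\Psi$.} Immediate from the factorization principle: any class in $H^n_{TC}(X, A)$ is represented by a continuous cocycle $g$, which by the above equals $\iota_N f \pi_N^n$ for a cocycle $f \in \Gamma^n(X_N, A_N)$, and $\Psi$ sends the colimit class of $[f]$ to $[g]$.

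\emph{Injectivity of $\Psi$.} Let $\xi \in \varinjlim H^n_T(X_n, A_n)$ with $\Psi(\xi) = 0$; represent $\xi$ by a cocycle $f \in \Gamma^n(X_N, A_N)$ for some $N$. Then $\sigma_N([f]) = 0$ in $H^n_{TC}(X, A)$, so $\iota_N f \pi_N^n = \delta h$ for some continuous $h \in \Gamma^{n-1}(X, A)$. Applying the factorization principle to $h$, write $h = \iota_{N'} \tilde h \pi_{N'}^{n-1}$ with $\tilde h \in \Gamma^{n-1}(X_{N'}, A_{N'})$ and $N' \geq N$ (enlarge $N'$ so that it dominates both the level for $h$ and $N$, composing with structure maps as before). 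Denote by $\bar f \in C^n(X_{N'}, A_{N'})$ the image of $f$ under the system map $\tau_{N'-1}\cdots\tau_N$ at the cochain level (i.e.\ via the appropriate $\psi^*$ and $\phi\circ-$). Since $\iota_{N'}$ commutes with the $\iota_N$-composites and $\pi_{N'}^n$ refines $\pi_N^n$, the equality $\iota_N f \pi_N^n = \delta(\iota_{N'}\tilde h \pi_{N'}^{n-1}) = \iota_{N'}(\delta\tilde h)\pi_{N'}^{n-1}$ becomes $\iota_{N'}\bar f \pi_{N'}^n = \iota_{N'}(\delta\tilde h)\pi_{N'}^n$ after composing the left side with the structure maps; by injectivity of $\iota_{N'}$ and surjectivity of $\pi_{N'}^n$ we get $\bar f = \delta \tilde h$ in $C^n(X_{N'}, A_{N'})$. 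Hence the image of $[f]$ in $H^n_T(X_{N'}, A_{N'})$ vanishes, so $\xi = 0$ in the colimit. Therefore $\Psi$ is injective, and combined with surjectivity, an isomorphism.
\end{proof}
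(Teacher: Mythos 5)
Your proposal follows essentially the same route as the paper: both arguments rest on the factorization property of continuous maps from a compact Hausdorff inverse limit into a discrete space (Lemma~\ref{lem:cptH}), used once to show that every continuous cocycle on $\varprojlim X_k$ descends to a finite level (surjectivity) and once to show that a continuous cobounding cochain descends to a finite level (injectivity). Your write-up is more careful than the paper's about the bookkeeping (the normalization condition descending along the surjection $\pi_N^n$, compatibility with the differentials), and that extra care is welcome.

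One step does not hold at the stated level of generality: you repeatedly use injectivity of the canonical maps $\iota_m : A_m \to \varinjlim A_l$ to conclude, e.g., that $\iota_N(\delta f)\pi_N^{n+1} = 0$ forces $\delta f = 0$, and likewise that $\bar f = \delta\tilde h$ on the nose at level $N'$. For a general direct system of abelian groups the maps $\iota_m$ need not be injective (nothing in the hypotheses forces the structure maps $\phi_m$ to be injective), so these conclusions can fail at level $N$. The fix is exactly the paper's ``choosing $t$ large enough'' step: $\delta f$ takes only finitely many values (the $X_N^{n+1}$ are finite), each of which lies in $\ker\iota_N = \bigcup_{t\ge N}\ker(\phi_{t-1}\cdots\phi_N)$, so after pushing $f$ forward to a sufficiently large level $t$ its coboundary vanishes honestly, and the same device repairs the injectivity argument. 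With that adjustment your proof is correct and coincides with the paper's.
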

\begin{proof}
\begin{sloppypar}
Suppose $\alpha \in \varinjlim H^\bullet_T(X_n,A_n)$ is mapped to zero by $\Psi$. By construction of $\Psi$, it means that there exists some $i \in \mathbb{N}$ such that $[\iota_i f \pi_i^\bullet]$ is the zero class in $H_{TC}^\bullet(\varprojlim X_k, \varinjlim A_l)$, where $f$ is a representative of a class $[f] \in H^\bullet_T(X_i,A_i)$, $\pi_i^\bullet$ is the projection on the $i^{th}$ factor	and $\iota_i$ is the natural map $A_i \rightarrow \varinjlim A_l$. So $\iota_i f \pi_i^\bullet$ is a coboundary of some continuous $g : (\varprojlim X_k)^{\bullet-1} \rightarrow \varinjlim A_l$ which  factors through some $X^{\bullet-1}_j$, $j\in \mathbb{N}$. Choosing $t\in \mathbb{N}$ large enough it follows that $[\iota_t f \pi_t^\bullet] = 0$ in $H^\bullet_T(X_t,A_t)$.  Since $[\iota_i f \pi_i^\bullet]\sim[\iota_t f \pi_t^\bullet]$ in $\varinjlim H^\bullet_T(X_n,A_n)$ it follows that $\alpha$ is the zero class.
\end{sloppypar}

Suppose now we are given a class $[\beta] \in H_{TC}^\bullet(\varprojlim X_k, \varinjlim A_l)$. Since $\beta$ is continuous and $\varinjlim A_l$ is discrete, it factors through some $X_i^\bullet$. Therefore its image in $\varinjlim A_l$ is finite and it will be contained in some $A_j$. Choosing $t\in \mathbb{N}$ large enough, we get that $[\beta]$ is the image $T\overline{[h]}$, for some $[h] \in H^\bullet_T(X_t,A_t)$, where the bar symbol indicates that we are considering a representative class in $ \varinjlim H^\bullet_T(X_n,A_n)$.
	\end{proof}

    See \cite{Wilson} for an analogous statement for group cohomology.

Observe that the construction of the morphism $\Psi$ and the proof above are still valid if we consider topological compact Hausdorff quandles $X_n$ and replace each $H^\bullet_T(X_n,A_n)$ by their continuous counterparts, in virtue of Lemma~\ref{lem:cptH}.

	Fix an odd prime $p\in\mathbb{Z}$ and choose $u\in\mathbb{Z}$ such that $(u, p)=1$. Then multiplications by $u$ and by $1-u$ define automorphisms of
    $\mathbb{Z}/p^n\mathbb{Z}$, for any $n\in\mathbb{N}$.
    Thus we obtain an Alexander quandle structure on $\mathbb{Z}/p^n\mathbb{Z}$, which will be denoted by $X^u_{n}$.
	Also recall that there are natural maps $ \mathbb{Z}/p^n\mathbb{Z} \longrightarrow \mathbb{Z}/p^{n+1}\mathbb{Z}$ given by multiplication by $p$, which commute with the action by $\mathbb{Z}$ and consequently a direct system, whose direct limit is the Pr\"ufer group $\mathbb{Z}(p^\infty)$.
    Thus we have an  Alexander quandle
    $(\mathbb{Z}(p^\infty), u)$, denoted also by  $\mathbb{Z}(p^\infty)_u$.

	\begin{cor}
		$H^1_{TC}(\varprojlim X^u_n,\mathbb{Z}(p^\infty)_u) \cong  \mathbb{Z}(p^\infty)_u \times \mathbb{Z}(p^\infty)_u$
		for any  $u\in\mathbb{Z}$ such that $(u, p)=1$.
	\end{cor}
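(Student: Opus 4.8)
The plan is to feed the projective system $(X^u_n,\psi_n)$ of finite discrete quandles together with the direct system $(A_n,\phi_n)$ of coefficients — where $A_n=X^u_n$ (the Alexander quandle $\mathbb{Z}/p^n\mathbb{Z}$ with automorphism $u$) and $\phi_n$ is multiplication by $p$ — into the Proposition immediately above. Note that $\varprojlim X^u_k$ is exactly the ring $\mathbb{Z}_p$ of $p$-adic integers equipped with the Alexander quandle operation $x*y=ux+(1-u)y$, and $\varinjlim A_l=\mathbb{Z}(p^\infty)_u$. That Proposition then yields a natural isomorphism
\[
H^1_{TC}\bigl(\varprojlim X^u_k,\,\mathbb{Z}(p^\infty)_u\bigr)\;\cong\;\varinjlim_n H^1_T(X^u_n,X^u_n),
\]
so the task reduces to computing the finite groups $H^1_T(X^u_n,X^u_n)$ and the transition maps $\tau_n$ between them.

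First I would compute each $H^1_T(X^u_n,X^u_n)$. By the discussion of continuous first cohomology, $H^1_T(X^u_n,X^u_n)$ is the abelian group (under pointwise addition) of quandle homomorphisms $\eta\colon X^u_n\to X^u_n$. Under the standing hypothesis recalled in the construction of $X^u_n$ — that multiplication by $u$ and by $1-u$ are automorphisms of $\mathbb{Z}/p^n\mathbb{Z}$ — the quandle $X^u_n$ is indecomposable, since $R_yR_{y'}^{-1}$ is translation by $(1-u)(y-y')$ and these exhaust all translations of $\mathbb{Z}/p^n\mathbb{Z}$. Hence the argument used in the real case (Lemma~\ref{lem:similar} and the Proposition on $H^1_{TC}(({\mathbb R}^n,S),({\mathbb R}^m,T))$) applies verbatim: replacing $\eta$ by $\eta-\eta(0)$ gives a quandle homomorphism fixing $0$; the identities $\eta(ux)=u\eta(x)$, $\eta((1-u)y)=(1-u)\eta(y)$ together with invertibility of $u$ and $1-u$ force it to be additive, hence multiplication by some $c\in\mathbb{Z}/p^n\mathbb{Z}$ (the intertwining condition being vacuous in the commutative case). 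Thus every $\eta$ has the form $\eta(x)=cx+d$ for unique $c,d\in\mathbb{Z}/p^n\mathbb{Z}$, and $\eta\mapsto(c,d)$ is a group isomorphism $H^1_T(X^u_n,X^u_n)\cong\mathbb{Z}/p^n\mathbb{Z}\oplus\mathbb{Z}/p^n\mathbb{Z}$.

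Next I would identify the transition maps. By construction $\tau_n=\phi_n\circ\psi_n^{*}$, i.e. $\eta\mapsto\phi_n\circ\eta\circ\psi_n$ with $\psi_n$ reduction modulo $p^n$ and $\phi_n$ multiplication by $p$. A short computation with integer lifts shows that if $\eta(x)=cx+d$ then $(\phi_n\circ\eta\circ\psi_n)(x)=p\tilde c\,x+p\tilde d$ read modulo $p^{n+1}$; so, under the identifications of the previous step, $\tau_n$ is multiplication by $p$ on each of the two summands, i.e. the standard embedding $\mathbb{Z}/p^n\mathbb{Z}\hookrightarrow\mathbb{Z}/p^{n+1}\mathbb{Z}$. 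Since direct limits commute with finite products (as recorded in the excerpt), we obtain
\[
\varinjlim_n H^1_T(X^u_n,X^u_n)\;\cong\;\Bigl(\varinjlim_n\mathbb{Z}/p^n\mathbb{Z}\Bigr)^{2}\;=\;\mathbb{Z}(p^\infty)\times\mathbb{Z}(p^\infty),
\]
which is the claimed $\mathbb{Z}(p^\infty)_u\times\mathbb{Z}(p^\infty)_u$ (the subscript $u$ being inessential at the level of the underlying abelian group of $H^1$).

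The main obstacle is the linearity step of the second paragraph: showing that \emph{every} quandle endomorphism of the finite Alexander quandle $X^u_n$ is affine. I would carry this out exactly as in the real case — normalize to fix $0$, then deduce additivity of the normalized map from the two homogeneity relations above, using that $u$ and $1-u$ are units. Everything else — the $p$-adic description of $\varprojlim X^u_k$, the explicit form of $\tau_n$, and interchanging the colimit with the product — is routine bookkeeping. As an alternative one can bypass the finite levels entirely: a continuous quandle homomorphism from $\varprojlim X^u_k=(\mathbb{Z}_p,u)$ into the discrete quandle $\mathbb{Z}(p^\infty)_u$ factors through some $X^u_n$ by Lemma~\ref{lem:cptH}, whence the same conclusion.
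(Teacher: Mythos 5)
Your proposal is correct and follows essentially the same route as the paper: identify $H^1_T(X^u_n,\mathbb{Z}/p^n\mathbb{Z})$ with the affine maps $x\mapsto\alpha x+\beta$, hence with $\mathbb{Z}/p^n\mathbb{Z}\times\mathbb{Z}/p^n\mathbb{Z}$, observe that the transition maps $\tau_n$ become multiplication by $p$ on each coordinate, and pass to the direct limit via the Proposition on $\Psi$. The only difference is that you spell out the normalization-and-additivity argument showing every quandle endomorphism of $X^u_n$ is affine, which the paper delegates to ``as in Section 4.1''; this is a welcome but not substantively different elaboration.
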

\begin{proof}
	As in Section 4.1 it is possible to show that the first twisted (discrete) cohomology group $H_T^1(X^u_n, \mathbb{Z}/p^n\mathbb{Z})$ is  the abelian group of affine maps $\{f_{\alpha,\beta}:\mathbb{Z}/p^n\mathbb{Z}\rightarrow\mathbb{Z}/p^n\mathbb{Z}\mid f(x) = \alpha x + \beta,\ \alpha,\beta\in \mathbb{Z}/p^n\mathbb{Z}\}$ which can be seen to be isomorphic to $\mathbb{Z}/p^n\mathbb{Z}\times \mathbb{Z}/p^n\mathbb{Z}$ for all $n\in\mathbb{N}$, via the isomorphism $f_{\alpha,\beta}\mapsto\alpha\times\beta$.
	Using the definition of $\sigma_n:H^1_T(X^u_n,\mathbb{Z}/p^n\mathbb{Z}) \rightarrow H^1_T(X^u_{n+1},\mathbb{Z}/p^{n+1}\mathbb{Z})$ we have that $(\sigma_nf) (x) = (pf\pi_n) [x] = p\alpha [x] + p\beta$, from which we obtain the direct limit
	\[
	\mathbb{Z}/p\mathbb{Z}\times \mathbb{Z}/p\mathbb{Z} \rightarrow \cdots \rightarrow \mathbb{Z}/p^n\mathbb{Z}\times\mathbb{Z}/p^n\mathbb{Z} \rightarrow \cdots
	\]
	where each map is just multiplication by $p$ on each coordinate. \\
	It follows that $\varinjlim H^1_T(X^u_n,\mathbb{Z}/p^n\mathbb{Z}) = \mathbb{Z}(p^\infty)_u\times\mathbb{Z}(p^\infty)_u$ and the result follows.
\end{proof}

\begin{cor} $H_C^3(\varprojlim R_{p^n}, \mathbb{Z}/p\mathbb{Z}) = 0$, where $R_{p^n}$ denotes the dihedral quandle on $p^n$ elements and the cohomology group is meant to be untwisted.
	\end{cor}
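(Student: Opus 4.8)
The plan is to deduce the statement from the proposition above, which identifies the continuous cohomology of the inverse limit with a colimit of ordinary quandle cohomology groups of the finite dihedral quandles, and then to show that this colimit vanishes; this is the quandle analogue of the classical vanishing of the continuous cohomology of $\mathbb{Z}_p$ in degrees $\ge 2$ (cf.\ \cite{Wilson}).

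First I would apply the proposition with $X_n=R_{p^n}=\mathbb{Z}/p^n\mathbb{Z}$, whose reduction maps $\psi_n\colon R_{p^{n+1}}\to R_{p^n}$ are quandle morphisms as noted earlier in this section, and with the \emph{constant} direct system of coefficients $A_m=\mathbb{Z}/p\mathbb{Z}$, all connecting maps the identity and all $T_m=1$. Then $\varinjlim A_l=\mathbb{Z}/p\mathbb{Z}$, each $R_{p^n}$ is a finite discrete quandle, and $\varprojlim R_{p^n}=\mathbb{Z}_p$ carries the projective limit topology, so the hypotheses are met and
$$H^3_C(\varprojlim R_{p^n},\mathbb{Z}/p\mathbb{Z})\;\cong\;\varinjlim_n H^3_Q(R_{p^n},\mathbb{Z}/p\mathbb{Z}),$$
the connecting maps of the colimit being the pullbacks $\psi_n^*$ along the reductions. (Because the coefficient system is constant, no ``multiplication by $p$'' factor enters here, in contrast with the first corollary of this section.) It remains to prove $\varinjlim_n H^3_Q(R_{p^n},\mathbb{Z}/p\mathbb{Z})=0$.

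For this I would invoke the known computations of the third quandle cohomology of dihedral quandles --- Mochizuki's description of the $3$-cocycles of Alexander quandles, together with the work of Niebrzydowski--Przytycki and of Clauwens on the homology of dihedral quandles: for $p$ an odd prime, $H^3_Q(R_{p^n},\mathbb{Z}/p\mathbb{Z})$ is a nonzero finite $\mathbb{F}_p$-vector space generated by explicit polynomial cocycles of Mochizuki type. Since the groups themselves do not vanish, the collapse of the colimit has to come from the connecting maps, and the heart of the argument is the claim that $\psi_n^*\colon H^3_Q(R_{p^n},\mathbb{Z}/p\mathbb{Z})\to H^3_Q(R_{p^{n+1}},\mathbb{Z}/p\mathbb{Z})$ is the zero map, or at worst that some finite composite $\psi_m^*\circ\cdots\circ\psi_n^*$ vanishes; granting this, each class dies at a later stage, the colimit is $0$, and the corollary follows.

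To establish that claim I would compare with the cohomology of the cyclic groups $\mathbb{Z}/p^n\mathbb{Z}$: the nontrivial $3$-classes of $R_{p^n}$ are assembled at the cochain level from the ``divided Frobenius'' cochain $\chi_p(a,b)=\tfrac1p\big((a+b)^p-a^p-b^p\big)\bmod p$ and its analogues, which represents the nonzero class of $H^2(\mathbb{Z}/p^n\mathbb{Z},\mathbb{Z}/p\mathbb{Z})$; and the inflation $H^2(\mathbb{Z}/p^n\mathbb{Z},\mathbb{Z}/p\mathbb{Z})\to H^2(\mathbb{Z}/p^{n+1}\mathbb{Z},\mathbb{Z}/p\mathbb{Z})$ along the reduction is zero, because the integral inflation $H^2(\mathbb{Z}/p^n;\mathbb{Z})\to H^2(\mathbb{Z}/p^{n+1};\mathbb{Z})$ has image in $p\cdot H^2(\mathbb{Z}/p^{n+1};\mathbb{Z})$ and hence vanishes mod $p$ (similarly in degree $3$). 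Concretely, after lifting the arguments of a generating $3$-cocycle from $\mathbb{Z}/p^n\mathbb{Z}$ to $\mathbb{Z}/p^{n+1}\mathbb{Z}$ and expanding, the congruences $\binom{p}{i}\equiv 0\pmod p$ for $0<i<p$ let one write the pulled-back cocycle as a coboundary $\delta g$ with $g$ explicit (automatically continuous, $\mathbb{Z}/p\mathbb{Z}$ being discrete) --- the quandle counterpart of the vanishing of inflation. The step I expect to be the main obstacle is precisely this: making the comparison with group cohomology rigorous at the level of the quandle cochain complex of $R_{p^n}$, either by exhibiting the generating $3$-cocycles together with primitives $g$ witnessing $\psi_n^*(\text{generator})=\delta g$, or by establishing a structural identification of $H^*_Q(R_{p^n})$ with the (relative) cohomology of the dihedral group $D_{p^n}=\mathbb{Z}/p^n\mathbb{Z}\rtimes\mathbb{Z}/2\mathbb{Z}$ under which the $\psi_n^*$ become inflation maps, which are classically zero in degrees $\ge 2$. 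Everything else --- the reduction to the colimit and the colimit being $0$ once the connecting maps collapse --- is formal.
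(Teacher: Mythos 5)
Your proposal follows essentially the same route as the paper: identify $H^3_C(\varprojlim R_{p^n},\mathbb{Z}/p\mathbb{Z})$ with $\varinjlim_n H^3_Q(R_{p^n},\mathbb{Z}/p\mathbb{Z})$ via the proposition, invoke Mochizuki's computation $H^3_Q(R_{p^n},\mathbb{Z}/p\mathbb{Z})\cong\mathbb{Z}/p\mathbb{Z}$, and conclude by showing the connecting maps $\psi_n^*$ vanish so that the colimit is trivial. The only difference is that where you sketch a (not fully carried out) inflation-in-group-cohomology heuristic for the vanishing of $\psi_n^*$, the paper simply asserts that this vanishing follows directly from Mochizuki's proof, so you have correctly isolated the same key point.
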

\begin{proof}
\begin{sloppypar}
	For a given odd prime, it has been computed by Mochizuki \cite{Mo}, that $H^3(R_{p^n}, \mathbb{Z}/p\mathbb{Z}) = \mathbb{Z}/p\mathbb{Z}$, where $R_{p^n}$. Directly from the proof in \cite{Mo}  it also follows that the map $H^3(R_{p^n}, \mathbb{Z}/p\mathbb{Z}) \rightarrow H^3(R_{p^{n+1}}, \mathbb{Z}/p\mathbb{Z})$ induced by the canonical projection $ R_{p^{n+1}} \rightarrow R_{p^n}$ is the trivial map. We obtain the inductive system:
    \end{sloppypar}
	\[
	\mathbb{Z}/p\mathbb{Z} \xrightarrow{0} \mathbb{Z}/p\mathbb{Z} \xrightarrow{0} \cdots \xrightarrow{0} \mathbb{Z}/p\mathbb{Z} \xrightarrow{0} \cdots
	\]
	whose direct limit is the trivial group.
\end{proof}

\appendix

\section{Further Non-trivial Cohomology}\label{sec:nontriv}

In this section we present a generalization of Proposition~\ref{prop:linear} that provides further examples of non-trivial cohomology groups.

\begin{pro}
	Let $X=({\mathbb R}^n, S)$ and $A=({\mathbb R}^m, T)$ be  Alexander quandles, where $S \in {\rm GL}_n(\mathbb{R})$ and
	$T \in {\rm GL}_m(\mathbb{R})$, respectively.
	Then $H^2_{\rm TC} ( X, A ) \neq 0$ if  the following conditions hold for $k>1$:
	$\sum_{i=0}^{k+1} (-S)^i = 0  = \sum_{i=0}^{k+1} (-T)^i$, and there exists an $n \times m$ matrix $C$
	such that $\sum_{\ell=0} ^{k} (-T)^\ell C (  \sum_{j=1}^{k-\ell +1} (-S)^j ) \neq 0$.
\end{pro}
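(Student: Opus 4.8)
The plan is to reproduce the mechanism of Proposition~\ref{prop:linear}, replacing the two-term cycle used there by a longer $T$-weighted telescoping cycle that runs along the orbit of the Alexander shift $(a,b)\mapsto (b,a*b)$ on $X^{2}$.

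First, exactly as in the proof of Proposition~\ref{prop:linear}, I would form the block matrix
\[
M=\begin{pmatrix} S & O \\ C & T \end{pmatrix}\in \mathrm{GL}_{n+m}(\mathbb{R})
\]
(with $C$ an $m\times n$ matrix, so that the cocycle below takes values in $\mathbb{R}^{m}$) and observe that $\mathbb{R}^{n+m}$ with operation $u_{1}*u_{2}=Mu_{1}+(I-M)u_{2}$ is the topological twisted extension $X\times_{\phi}A$ of $X$ by $A$ associated to the continuous $2$-cocycle $\phi(x_{1},x_{2})=C(x_{1}-x_{2})$. By Lemma~\ref{lem:Kronecker} it then suffices to exhibit a twisted $2$-cycle $w\in Z_{2}^{\mathrm{T}}(X,A)$ with $\phi(w)\neq 0$.

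Next, I would fix $u_{0},v_{0}\in X$ with $u_{0}\neq v_{0}$, put $v_{-1}:=u_{0}$, and define $v_{j+1}:=v_{j-1}*v_{j}=Sv_{j-1}+(I-S)v_{j}$ for $j\geq 0$, so that $(v_{j-1},v_{j})$ is the $j$-th iterate of $(u_{0},v_{0})$ under the shift. Everything rests on two elementary identities: (a) $v_{j+1}-v_{j}=-S(v_{j}-v_{j-1})$, whence $v_{j+1}-v_{j}=(-1)^{j}S^{j+1}(u_{0}-v_{0})$; and (b) $v_{j}*v_{j+1}=v_{j+2}$, so that in the twisted chain complex $\partial(v_{j},v_{j+1})=T(v_{j})+(v_{j+1})-T(v_{j+1})-(v_{j+2})$. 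Summing (a) and using the first hypothesis $\sum_{i=0}^{k+1}(-S)^{i}=0$ yields $v_{k+2}=v_{0}$; i.e.\ the orbit closes up after $k+2$ steps. Writing $P_{j}(T):=\sum_{\ell=0}^{k-j}(-T)^{\ell}$, I would then take
\[
w:=\sum_{j=0}^{k}P_{j}(T)\,(v_{j},v_{j+1}),
\]
a (non-degenerate) twisted $2$-chain.

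Then I would verify $\partial w=0$. Expanding $\partial w$ by (b) and collecting the coefficient of each basis $1$-chain $(v_{i})$, the ``interior'' coefficient $TP_{i}+(I-T)P_{i-1}-P_{i-2}$ (for $2\leq i\leq k$) vanishes identically once one substitutes $P_{i-1}=P_{i}+(-T)^{k-i+1}$ and $P_{i-2}=P_{i}+(-T)^{k-i+1}+(-T)^{k-i+2}$. The only surviving contributions come from $(v_{0})$ --- which also absorbs the term $-P_{k}(T)(v_{k+2})=-(v_{0})$ via $v_{k+2}=v_{0}$ --- and from $(v_{1})$; both reduce, after the same manipulation, to $\pm\sum_{\ell=0}^{k+1}(-T)^{\ell}$, which is $0$ by the second hypothesis. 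Hence $w$ is a twisted $2$-cycle.

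Finally, I would compute $\phi(w)$. Since $\phi$ is $\mathbb{Z}[T^{\pm 1}]$-linear on twisted chains and, by (a), $\phi(v_{j},v_{j+1})=C(v_{j}-v_{j+1})=C(-S)^{j+1}(u_{0}-v_{0})$, interchanging the order of summation gives
\[
\phi(w)=\Big(\sum_{\ell=0}^{k}(-T)^{\ell}\,C\sum_{j=1}^{k-\ell+1}(-S)^{j}\Big)(u_{0}-v_{0}).
\]
The third hypothesis is exactly the statement that this matrix is nonzero, so I may choose $u_{0},v_{0}$ (for instance $v_{0}=0$) with $u_{0}-v_{0}$ outside its kernel, so that $\phi(w)\neq 0$. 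By Lemma~\ref{lem:Kronecker}, $[\phi]\neq 0$ in $H_{\mathrm{TC}}^{2}(X,A)$, hence $H_{\mathrm{TC}}^{2}(X,A)\neq 0$.

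The one genuinely laborious step is the verification $\partial w=0$: it requires careful bookkeeping of the polynomials $P_{j}(T)$ together with the periodicity $v_{k+2}=v_{0}$, which folds the last boundary term back onto $(v_{0})$. This is the lengthy computation alluded to in the remark following Proposition~\ref{prop:linear}; it is of the same flavour as the case $k=1$ handled there.
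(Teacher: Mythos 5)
Your proof is correct, and it reaches the same pairing value $\phi(w)=\sum_{\ell=0}^{k}(-T)^{\ell}C\bigl(\sum_{j=1}^{k-\ell+1}(-S)^{j}\bigr)(u_{0}-v_{0})$ as the paper, but via a genuinely different witness cycle. The paper takes $w=\sum_{i=0}^{k}T^{i}(u_{i},v_{i})$ with pure powers of $T$ as coefficients, forces $\partial w=\bigl(\sum_{\ell=0}^{k+1}(-T)^{\ell}\bigr)[(v_{0})-(v_{1})]$ by imposing the recurrence $v_{1}=Su_{0}+(1-S)v_{0}$, $u_{i-1}=Su_{i}+(1-S)v_{i}$, $v_{j}=v_{j-2}$, $u_{k}=v_{k-1}$, and then must solve that recurrence explicitly, with separate closed formulas for $u_{k-2i}$ and $u_{k-(2i+1)}$ in the cases $k$ odd and $k$ even. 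You instead ride the orbit $(v_{j},v_{j+1})$ of the shift $(a,b)\mapsto(b,a*b)$ and put the partial geometric sums $P_{j}(T)=\sum_{\ell=0}^{k-j}(-T)^{\ell}$ into the coefficients; the cycle condition then follows from a clean telescoping plus the periodicity $v_{k+2}=v_{0}$, with no parity split. What your route buys is transparency: the identities $v_{j+1}-v_{j}=(-S)^{j+1}(v_{0}-u_{0})$ and $v_{j}*v_{j+1}=v_{j+2}$ do all the work, and the interchange of summation that produces the hypothesized matrix is immediate. Two small bookkeeping points you should make explicit: the coefficient of $(v_{k+1})$, namely $(I-T)P_{k}-P_{k-1}=(I-T)-(I-T)=0$, is outside your ``interior'' range $2\le i\le k$ and is not literally one of the $(v_{0})$, $(v_{1})$ contributions, so it deserves its own line; and one should note that $u_{0}\ne v_{0}$ together with $S$ invertible guarantees $v_{j}\ne v_{j+1}$ for all $j$, so no term of $w$ is degenerate (harmless here since $\phi(x,x)=0$ anyway, but worth saying since the chain groups are defined modulo degeneracies). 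With those two sentences added, the argument is complete.
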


\begin{proof}
	Let $w=\sum_{i=0}^{k} T^i (u_i, v_i) $.
	One computes
	\begin{eqnarray*}
		\partial w &=&
		\sum_{i=0}^{k} T^i [ T (u_i) + (1-T) ( v_i ) - ( S u_i + (1-S) v_i ) ] \\
		&=&
		[ ( v_0 ) - ( S u_0 + (1-S) v_0 ) ] \\
		& & +
		\sum_{i=1}^{k-1} T^i [ (u_{i-1}) - (v_{i-1}) + ( v_i )  - ( S u_i + (1-S) v_i ) ] \\
		& & +
		T^{k+1} [ ( u_k ) - (v_k) ] .
	\end{eqnarray*}
	By setting
	\begin{equation} \label{eqn:ui}
	v_1=S u_0 + (1-S) v_0 ,  \quad u_{i-1}= S u_{i} + (1-S) v_i , \quad  v_j=v_{j-2},   \quad  {\rm and}  \quad u_{k}=v_{k-1}
	\end{equation}
	for $i=1, \ldots, k$ and $j=2, \ldots, k$, we obtain
	$$ \partial w = ( \sum_{\ell=0}^{k+1} (-T)^\ell )[  (v_0) - (v_1) ]. $$
	Hence the condition (\ref{eqn:ui}) and the assumption $  \sum_{\ell=0}^{k+1} (-T)^\ell =0$ implies $\partial w=0$.

	For $k$ odd, set
	\begin{eqnarray*}
		u_{k-2i} &=&  ( \sum_{j=2}^{2i+1} (-S)^j ) u_0 + (1 -  \sum_{j=2}^{2i+1} (-S)^j ) v_0 \\
		u_{k-(2i+1)} &=&  ( - \sum_{j=1}^{2i+2} (-S)^j ) u_0 + ( \sum_{j=0}^{2i+2} (-S)^j ) v_0
	\end{eqnarray*}
	and for even $k$, set
	\begin{eqnarray*}
		u_{k-2i} &=&  ( - \sum_{j=1}^{2i+1} (-S)^j ) u_0 + ( \sum_{j=0}^{2i+1} (-S)^j ) v_0 \\
		u_{k-(2i+1)} &=&  ( \sum_{j=2}^{2i+2} (-S)^j ) u_0 + ( 1- \sum_{j=2}^{2i+2} (-S)^j ) v_0 .
	\end{eqnarray*}
	Then it is checked by induction that these satisfy Equations (\ref{eqn:ui}).

	For $\phi( x_1, x_2 ) = C(x_1 -x_2 )$ as in Proposition~\ref{prop:linear}, one computes
	$$\phi (w) = \sum _{\ell =0} ^{k}
    \phi(T^\ell (u_\ell, v_\ell) )
    = \sum _{\ell =0} ^{k} T^\ell C (u_\ell - v_\ell)
    =
    \sum_{\ell =0} ^{k} (-T)^\ell C (  \sum_{j=1}^{k-\ell +1} (-S)^j ) (u_0 - v_0)  $$
	as desired. The last equality is obtained by substituting the formulas for $u_{k - 2i}$
    and $u_{k-(2i+1)}$ for each case of $k$ odd and even.
\end{proof}

\section{Continuous Isomorphisms of Topological Quandles}\label{ContinIsom}

In this section we point out that continuous isomorphism classes differ from algebraic isomorphism classes for
topological quandles.
The results of this section were obtained by W.~Edwin Clark.

\begin{lem}\cite{Reem} \label{lem:reem}
If $T: {\mathbb R}^n \rightarrow {\mathbb R}^m$ is additive and continuous, then $T$ is linear.
\end{lem}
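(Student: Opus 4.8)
The statement to prove is Lemma~\ref{lem:reem}: \emph{if $T:\mathbb{R}^n\to\mathbb{R}^m$ is additive and continuous, then $T$ is linear.}

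\medskip

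The plan is to reduce to the scalar case and then bootstrap from additivity to $\mathbb{Q}$-linearity to $\mathbb{R}$-linearity using continuity. First I would observe that it suffices to treat each coordinate of $T$ separately, so without loss of generality we may assume $m=1$; and then, since additivity in $\mathbb{R}^n$ is equivalent to additivity in each variable with the others fixed at $0$ together with $T(x_1,\dots,x_n)=\sum_i T(0,\dots,x_i,\dots,0)$, it further suffices to prove the one-variable statement: an additive continuous $f:\mathbb{R}\to\mathbb{R}$ is of the form $f(x)=cx$. (I would spell out the reduction to one variable just enough to be convincing, since that is the only genuinely ``vector'' part of the argument.)

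\medskip

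For the one-variable case, the standard Cauchy functional equation argument runs as follows. From additivity, $f(0)=0$ and $f(-x)=-f(x)$, and by induction $f(kx)=kf(x)$ for all $k\in\mathbb{Z}$. Writing $c:=f(1)$, for a rational $p/q$ we get $qf(p/q)=f(p)=pc$, hence $f(p/q)=c\,(p/q)$; thus $f$ agrees with the linear map $x\mapsto cx$ on $\mathbb{Q}$. Now I would invoke continuity of $f$: since $\mathbb{Q}$ is dense in $\mathbb{R}$ and both $f$ and $x\mapsto cx$ are continuous, they coincide on all of $\mathbb{R}$. This gives $f(x)=cx$, establishing linearity.

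\medskip

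Piecing it back together, if $e_1,\dots,e_n$ is the standard basis and $T_j:\mathbb{R}\to\mathbb{R}^m$ is defined by $T_j(t)=T(te_j)$, each $T_j$ is additive and continuous, hence equals $t\mapsto tv_j$ for the vector $v_j:=T(e_j)\in\mathbb{R}^m$; then for arbitrary $x=\sum_j x_je_j$, additivity gives $T(x)=\sum_j T(x_je_j)=\sum_j x_jv_j$, which is exactly the linear map with matrix $[v_1\,\cdots\,v_n]$. I do not anticipate a serious obstacle here: the only place care is needed is the passage from $\mathbb{Q}$-linearity to $\mathbb{R}$-linearity, where continuity is essential (without it the axiom of choice produces pathological additive non-linear maps), so I would make sure the density-plus-continuity step is stated cleanly. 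This is, of course, a classical fact; since the paper attributes it to \cite{Reem}, I would keep the write-up brief and perhaps just cite that reference for the details.
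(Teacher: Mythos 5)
Your argument is correct: the reduction to coordinates and then to a single variable is legitimate, and the one-variable step (additivity $\Rightarrow$ $\mathbb{Q}$-homogeneity, then continuity plus density of $\mathbb{Q}$ $\Rightarrow$ $\mathbb{R}$-linearity) is the classical Cauchy functional equation argument. The paper itself gives no proof of this lemma and simply cites \cite{Reem}, so there is nothing to diverge from; your write-up supplies exactly the standard details that the citation stands in for.
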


We recall \cite{Sam} that a generalized Alexander quandle $({\mathbb R}^n, T)$
is indecomposable
if and only if $(I-T)$ is invertible.

\begin{lem}
\label{lem:similar}
Let $({\mathbb R}^n, S)$ and $({\mathbb R}^m, T)$
be  topological Alexander quandles,
such that  $I -S$ and $I-T$ are invertible.
Let $F: {\mathbb R}^n  \rightarrow {\mathbb R}^m$ be a continuous quandle homomorphism such that $F(0)=0$.
Then $S, T, F$ are linear and  the condition $FS=TF$ holds.
\end{lem}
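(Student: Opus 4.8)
The plan is to use the quandle homomorphism condition for $F: ({\mathbb R}^n, S) \rightarrow ({\mathbb R}^m, T)$, which reads
\[
F(Sx + (I-S)y) = TF(x) + (I - T)F(y) \qquad \text{for all } x, y \in {\mathbb R}^n,
\]
together with the normalization $F(0) = 0$, to deduce that $F$ is additive; once additivity is established, Lemma~\ref{lem:reem} upgrades it to linearity, and then the homomorphism identity becomes a linear-algebra identity that forces $FS = TF$.

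First I would extract two special cases of the homomorphism identity. Setting $y = 0$ and using $F(0) = 0$ gives $F(Sx) = TF(x)$ for all $x$. Setting $x = 0$ gives $F((I-S)y) = (I - T)F(y)$ for all $y$; since $I - S$ is invertible, substituting $y = (I-S)^{-1}z$ yields $F(z) = (I-T)F((I-S)^{-1}z)$, i.e. a companion relation. The main step is to combine these to get additivity. Given arbitrary $u, v \in {\mathbb R}^n$, I want to write $u$ and $v$ in the form $Sx + (I-S)y$ so that the right-hand side $TF(x) + (I-T)F(y)$ can be recombined. A clean way: for any $a, b$, apply the identity with $x = S^{-1}a$ and $y = (I-S)^{-1}b$, giving $F(a + b) = T F(S^{-1}a) + (I-T) F((I-S)^{-1}b)$; but from the two special cases above, $T F(S^{-1} a) = F(a)$ (apply $F(Sx)=TF(x)$ with $x = S^{-1}a$) and $(I-T)F((I-S)^{-1}b) = F(b)$ (from the companion relation). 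Hence $F(a+b) = F(a) + F(b)$, so $F$ is additive.

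With additivity in hand, Lemma~\ref{lem:reem} (Reem's lemma) gives that $F$ is linear; the hypotheses that $S, T$ are continuous additive automorphisms similarly make $S$ and $T$ linear via the same lemma. Finally, the relation $F(Sx) = TF(x)$ derived above, now read between linear maps, says exactly $FS = TF$, completing the proof. The one point requiring care — really the only obstacle — is making sure the algebraic manipulation that produces additivity is valid, i.e. that $S$ and $I - S$ are genuinely invertible so the substitutions $x = S^{-1}a$, $y = (I-S)^{-1}b$ make sense; invertibility of $I - S$ is a hypothesis, and invertibility of $S$ holds because $S \in {\rm GL}_n({\mathbb R})$ is an automorphism by assumption. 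No continuity is used beyond feeding it into Lemma~\ref{lem:reem}, which mirrors the remark made for the first-cohomology propositions.
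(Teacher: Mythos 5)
Your proposal is correct and follows essentially the same route as the paper: both extract the two special cases $F(Sx)=TF(x)$ and $F((I-S)y)=(I-T)F(y)$ from the homomorphism identity with $F(0)=0$, use surjectivity of $S$ and $I-S$ (your explicit substitutions $x=S^{-1}a$, $y=(I-S)^{-1}b$ are just the paper's observation that $Sx$ and $(I-S)y$ range over all of ${\mathbb R}^n$) to get additivity, and then invoke Lemma~\ref{lem:reem} to conclude linearity, with $FS=TF$ already in hand.
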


\begin{proof}
First from Lemma~\ref{lem:reem}, $S$ and $T$ are linear.
Since $F$ is a quandle homomorphism,
\begin{equation}\label{eqn:qhom}
F(S x + (I-S) y ) = T F(x) + (I-T) F(y)
\end{equation}
holds for all $x, y \in {\mathbb R}^n$.
By setting $x=0$ and $y=0$ respectively in Equation(\ref{eqn:qhom}) and using the assumption $F(0)=0$, we obtain
\begin{equation}\label{eqn:multi1}
F((I-S) y ) =  (I-T) F(y),
\end{equation}
and
\begin{equation} \label{eqn:multi2}
F(Sx) = TF(x),
\end{equation}
which is the condition $FS=TF$.
These Equations (\ref{eqn:multi1}) and (\ref{eqn:multi2}) also imply
\begin{equation}\label{eqn:add}
F(S x + (I-TS) y ) =  F( S x) +  F((I-S) y) .
\end{equation}
By the invertibility assumptions, we have
$\{ Sx \ | \ x \in {\mathbb R}^n \}= {\mathbb R}^n$ and
$\{ (I-S) y \ | \ y \in {\mathbb R}^n \}= {\mathbb R}^n$.
Hence Equation (\ref{eqn:add}) implies that
$F(a+b)=F(a)+F(b)$ for all $a, b \in {\mathbb R}^n$.
Since $F$ is additive and continuous, Lemma~\ref{lem:reem} implies that $F$ is linear.
\end{proof}

Solving the matrix equation $FS=TF$ can be found, for example,
in \cite{Barnnett}.
A direct calculation gives the following lemma.

\begin{lem} \label{lem:shift}
Let $({\mathbb R}^n, S)$ and $({\mathbb R}^m, T)$
be Alexander quandles.
Let $F: {\mathbb R}^n  \rightarrow {\mathbb R}^m$ be a  quandle homomorphism.
Let $a \in {\mathbb R}^m$. Then $F+a: {\mathbb R}^n  \rightarrow {\mathbb R}^m$ defined by
$(F+a)(x) = F(x) + a$ for $x \in {\mathbb R}^n$ is a quandle homomorphism.
\end{lem}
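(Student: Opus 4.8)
The plan is to verify the defining identity of a quandle homomorphism directly, by unwinding both sides using the explicit Alexander quandle operations. Recall that the operation on $({\mathbb R}^n, S)$ is $x*y = Sx + (I-S)y$ and that on $({\mathbb R}^m, T)$ is $u*v = Tu + (I-T)v$. So the statement to be checked is $(F+a)(x*y) = (F+a)(x)*(F+a)(y)$ for all $x,y \in {\mathbb R}^n$.

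First I would expand the left-hand side: $(F+a)(x*y) = F(x*y) + a = F(Sx + (I-S)y) + a$. Then I would expand the right-hand side: $(F+a)(x)*(F+a)(y) = T\bigl(F(x)+a\bigr) + (I-T)\bigl(F(y)+a\bigr) = TF(x) + (I-T)F(y) + \bigl(Ta + (I-T)a\bigr)$. The crucial observation is that $Ta + (I-T)a = a$ — equivalently, that $a*a = a$, the idempotency of the Alexander quandle on ${\mathbb R}^m$ — so the right-hand side collapses to $TF(x) + (I-T)F(y) + a$.

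Finally I would invoke the hypothesis that $F$ is a quandle homomorphism, which gives $F(Sx + (I-S)y) = TF(x) + (I-T)F(y)$; substituting this into the expression for the left-hand side shows it equals the right-hand side, completing the argument.

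Since this is a one-line matrix computation, there is no real obstacle: the only substantive input is the algebraic identity $T + (I-T) = I$ (i.e. quandle idempotency), which is exactly what allows a constant translation of a quandle homomorphism between Alexander quandles to remain a quandle homomorphism. The result can thus be summarized as \emph{"direct calculation"}, as the surrounding text already indicates.
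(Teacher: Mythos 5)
Your proof is correct and matches the paper exactly: the paper gives no written proof, stating only that ``a direct calculation gives the following lemma,'' and your expansion of both sides using the Alexander operations together with the identity $Ta + (I-T)a = a$ is precisely that calculation.
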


\begin{pro}
\label{prop:similar}
Let $({\mathbb R}^n, S)$ and $({\mathbb R}^n, T)$
be  indecomposable topological Alexander quandles.
If $F: {\mathbb R}^n \rightarrow {\mathbb R}^n$ is a continuous quandle isomorphism such that $F(0)=0$, then $S, T, F$  are linear and $S$ and $T$ are similar:
$T=FSF^{-1}$.
\end{pro}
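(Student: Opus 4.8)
The plan is to deduce this almost immediately from Lemma~\ref{lem:similar} together with the bijectivity that comes for free from $F$ being an isomorphism. First I would invoke the recalled criterion (from \cite{Sam}, stated just before Lemma~\ref{lem:similar}): since $(\mathbb{R}^n, S)$ and $(\mathbb{R}^n, T)$ are indecomposable, $I-S$ and $I-T$ are both invertible. Thus the hypotheses of Lemma~\ref{lem:similar} are satisfied by the continuous quandle homomorphism $F$ with $F(0)=0$, and that lemma gives at once that $S$, $T$, and $F$ are all linear and that the intertwining relation $FS = TF$ holds.

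It then remains only to upgrade $F$ to an element of $\mathrm{GL}_n(\mathbb{R})$. Since $F$ is a quandle \emph{isomorphism} it is, in particular, a bijection of $\mathbb{R}^n$; being also linear, it lies in $\mathrm{GL}_n(\mathbb{R})$, and $F^{-1}$ is then automatically linear. (Alternatively one may run Lemma~\ref{lem:similar} a second time on the continuous quandle homomorphism $F^{-1}$, which satisfies $F^{-1}(0)=0$, with the roles of $S$ and $T$ swapped, concluding directly that $F^{-1}$ is linear.) Rearranging $FS = TF$ as $T = FSF^{-1}$ then exhibits $S$ and $T$ as similar matrices, which is the claim.

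I do not anticipate a genuine obstacle: the proposition is essentially a repackaging of Lemma~\ref{lem:similar} plus the triviality that a bijective linear self-map is invertible. The only points deserving a word of care are that the equivalence ``indecomposable $\iff I-S$ (resp.\ $I-T$) invertible'' is exactly the cited input needed to apply Lemma~\ref{lem:similar}, and that the normalization $F(0)=0$ is what forces $F$ to be the linear (and not merely affine, cf.\ Lemma~\ref{lem:shift}) representative — so one should make sure this hypothesis is explicitly used rather than assumed implicitly.
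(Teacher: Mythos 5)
Your proof is correct and follows essentially the same route as the paper's, which simply cites Lemma~\ref{lem:reem} and Lemma~\ref{lem:similar}; you merely spell out the two details the paper leaves implicit (that indecomposability supplies the invertibility of $I-S$ and $I-T$ needed to apply Lemma~\ref{lem:similar}, and that bijectivity of the linear map $F$ lets one rewrite $FS=TF$ as $T=FSF^{-1}$).
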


\begin{proof}
By Lemma~\ref{lem:reem}, $S, T, F$ are linear.
By Lemma~\ref{lem:similar}, $S$ and $T$ are similar via $F$.
\end{proof}

\begin{pro}
\label{prop:noniso}
There is a family with continuum cardinality of topological quandle structures on ${\mathbb R}^n$ for all $n>0$, such that its elements
 are pairwise non-isomorphic as topological quandles but are isomorphic as algebraic quandles.

\end{pro}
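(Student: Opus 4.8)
The plan is to exhibit an explicit continuum-sized family of linear Alexander quandle structures on $\mathbb{R}^n$ that are algebraically isomorphic to one another but pairwise non-isomorphic \emph{as topological quandles}, using Proposition~\ref{prop:similar} to obstruct continuous isomorphisms. First I would fix a target algebraic isomorphism type: since $\mathbb{R}^n$ as an abelian group is a $\mathbb{Q}$-vector space of dimension equal to the continuum, it admits $\mathbb{Q}$-linear automorphisms that are wildly discontinuous, and conjugation by such automorphisms produces many algebraically-but-not-continuously equivalent Alexander structures. Concretely, I would take structures of the form $x*y = Tx + (I-T)y$ with $T$ ranging over a suitable set of matrices (or, more flexibly, over automorphisms of the group $(\mathbb{R}^n,+)$ that are $\mathbb{Q}$-linear but not $\mathbb{R}$-linear), all chosen so that $I-T$ is invertible (hence the quandles are indecomposable, by the cited result of \cite{Sam}), and all lying in a single orbit under the (discontinuous) automorphism group of $(\mathbb{R}^n,+)$ so that they are algebraically isomorphic.

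Next I would translate the non-isomorphism requirement into a matrix condition. By Proposition~\ref{prop:similar}, any \emph{continuous} quandle isomorphism between indecomposable topological Alexander quandles $(\mathbb{R}^n,S)$ and $(\mathbb{R}^n,T)$, after composing with a translation (Lemma~\ref{lem:shift}) to normalize $F(0)=0$, must be \emph{linear} and satisfy $T = FSF^{-1}$; that is, $S$ and $T$ must be similar as real matrices. So it suffices to produce a continuum of $n\times n$ real matrices $\{T_\lambda\}_{\lambda\in\Lambda}$, each with $I - T_\lambda$ invertible, that are pairwise \emph{non-similar over $\mathbb{R}$} but such that the corresponding Alexander quandles $(\mathbb{R}^n,T_\lambda)$ are pairwise isomorphic as abstract (discrete) quandles. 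The non-similarity is easy: for $n=1$ take $T_\lambda$ to be multiplication by $\lambda$ for $\lambda$ in an uncountable subset of $\mathbb{R}\setminus\{1\}$, and for general $n$ take $T_\lambda = \lambda I$ or a companion matrix with distinct eigenvalues; distinct eigenvalue data forces non-similarity and gives a continuum of classes.

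The heart of the argument — and the step I expect to be the main obstacle — is showing these are all \emph{algebraically} isomorphic. For the rank-one case $(\mathbb{R},\lambda)$ with $\lambda\notin\{0,1\}$, one checks that the quandle $(\mathbb{R}, x*y=\lambda x+(1-\lambda)y)$ depends, up to abstract isomorphism, only on the $\mathbb{Q}$-vector-space structure of $\mathbb{R}$ together with the $\mathbb{Q}[t,t^{-1},(1-t)^{-1}]$-module structure given by $t\mapsto\lambda$; and for transcendental $\lambda,\mu$ the fields $\mathbb{Q}(\lambda)$ and $\mathbb{Q}(\mu)$ are $\mathbb{Q}$-isomorphic, under which $\mathbb{R}$ (a continuum-dimensional vector space over each) becomes an isomorphic module, yielding an abstract quandle isomorphism that is manifestly not $\mathbb{R}$-linear, hence (by Proposition~\ref{prop:similar} and Lemma~\ref{lem:reem}) not continuous. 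I would carry this out carefully for a fixed uncountable set of algebraically independent transcendentals $\{\lambda_i\}$ (so any two generate $\mathbb{Q}$-isomorphic fields), extend scalars to handle general $n$ by taking $n$-fold direct sums or by using $T_\lambda$ diagonalizable with algebraically independent transcendental eigenvalues, and conclude: the family $\{(\mathbb{R}^n, T_{\lambda_i})\}$ has continuum cardinality, its members are pairwise algebraically isomorphic via the field-isomorphism trick, and pairwise \emph{continuously} non-isomorphic since distinct transcendental eigenvalue sets are never similar over $\mathbb{R}$. This gives the desired family for every $n>0$.
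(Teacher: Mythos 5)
Your proposal is correct and follows essentially the same route as the paper: take the Alexander quandles $(\mathbb{R}^n,\lambda I)$ for transcendental $\lambda$, rule out continuous isomorphisms via Lemma~\ref{lem:reem} and Proposition~\ref{prop:similar} (similarity would force $\lambda=\mu$), and build the discontinuous algebraic isomorphism as a semilinear map coming from the field isomorphism $\mathbb{Q}(\lambda)\cong\mathbb{Q}(\mu)$ together with the fact that $\mathbb{R}^n$ has continuum dimension over each. The paper phrases this via two $\mathbb{Q}(u)$-vector-space structures on $\mathbb{R}^n$ given by $u\mapsto s$ and $u\mapsto t$, which is the same construction.
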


\begin{proof}
Let $Q(u)$ be the field of rational functions over ${\mathbb R}$ with variable $u$.
Let $s \in {\mathbb R}$ be a transcendental.
Let $Q(u)$ act on ${\mathbb R}^n$ by the scalar multiple $f(u) \cdot x = f(s) x$.
Let $s, t$ be distinct transcendentals.
Then there are two vector space structures on ${\mathbb R}^n$ over $Q(u)$ by multiples by $s$ and $t$.
They have the same dimension as vector spaces, and therefore, there is a vector space isomorphism
$F: {\mathbb R}^n \rightarrow {\mathbb R}^n$ over $Q(u)$, and it satisfies $F(sx)=t F(x)$.
Hence $F$ is a quandle isomorphism.
If $F$ is continuous, then $F$ is linear over ${\mathbb R}$ by Lemma~\ref{lem:similar},
and $Fs=sF=tF$ and $s=t$, a contradiction.
Hence $F$ is a quandle isomorphism that is not continuous.
\end{proof}

\noindent
{\bf Acknowledgement.}
We thank W.~Edwin Clark, Xiang-dong Hou and Takefumi Nosaka
for valuable comments.
MS was partially supported by
NIH R01GM109459.

\end{document}